\newlength{\dhatheight}
\newcommand{\doublecheck}[1]{%
    \settoheight{\dhatheight}{\ensuremath{\check{#1}}}%
    \addtolength{\dhatheight}{-0.15ex}%
    \check{\vphantom{\rule{1pt}{\dhatheight}}%
    \smash{\check{#1}}}}
\newcommand{\doublehat}[1]{%
    \settoheight{\dhatheight}{\ensuremath{\hat{#1}}}%
    \addtolength{\dhatheight}{-0.25ex}%
    \hat{\vphantom{\rule{1pt}{\dhatheight}}%
    \smash{\hat{#1}}}}
\newtheorem{theorem}{Theorem}
\newtheorem{corollary}[theorem]{Corollary}
\newtheorem{lemma}[theorem]{Lemma}
\newtheorem{remark}[theorem]{Remark}
\newenvironment{proof}[1][Proof]{\textbf{#1.} }{\ \rule{0.5em}{0.5em}}
\newcommand{\kom}[1]{}
\renewcommand{\kom}[1]{{\bf [#1]}}
\definecolor{blau}{rgb}{0.1,0.0,0.9}
\newcounter{komcounter}
\numberwithin{komcounter}{section}
\begin{document}

\title{Strong maximum principle and boundary estimates for nonhomogeneous elliptic equations}

\author{
%Tomasz Adamowicz{\small{$^1$}},
Niklas L.P. Lundstr\"{o}m{\small{$^1$}},
Marcus Olofsson{\small{$^2$}},
Olli Toivanen{\small{$^3$}}
\\\\
%{\small{$^1$}}\it\small Institute of Mathematics of the Polish Academy of Sciences,\\
%\it\small 00-956 Warsaw, Poland\/{\rm ;}
%\it\small t.adamowicz@impan.pl
%\\\\
{\small{$^1$}}\it \small Department of Mathematics and Mathematical Statistics, Ume{\aa} University,\\
\it \small SE-90187 Ume{\aa}, Sweden\/{\rm ;}
\it \small niklas.lundstrom@umu.se
\\\\
{\small{$^2$}}\it \small Department of Mathematics, Uppsala University,\\
\it \small SE-75236 Uppsala, Sweden\/{\rm ;}
\it \small marcus.olofsson@math.uu.se
\\\\
{\small{$^3$}}\it \small  Department of Applied Physics, University of Eastern Finland,\\
\it \small Yliopistonranta 1, 70210 Kuopio, Finland\/{\rm ;}
\it \small olli@ollitoivanen.info
}

\maketitle

%%%%%%%%%%%%%%%%%%%%%%%%%%%%%%%%%%%%%%%%%%%%
%%%%%%%%%%%%%%%%%%%%%%%%%%%%%%%%%%%%%%%%%%%%
%%%%%%%%%%%%%%%%%%%%%%%%%%%%%%%%%%%%%%%%%%%%
%%%%%%%%%%%%%%%%%%%%%%%%%%%%%%%%%%%%%%%%%%%%
%%%%%%%%%%%%%%%%%%%%%%%%%%%%%%%%%%%%%%%%%%%%
%%%%%%%%%%%%%%%%%%%%%%%%%%%%%%%%%%%%%%%%%%%%
%%%%%%%%%%%%%%%%%%%%%%%%%%%%%%%%%%%%%%%%%%%%
%%%%%%%%%%%%%%%%%%%%%%%%%%%%%%%%%%%%%%%%%%%%
%%%%%%%%%%%%%%%%%%%%%%%%%%%%%%%%%%%%%%%%%%%%
%%%%%%%%%%%%%%%%%%%%%%%%%%%%%%%%%%%%%%%%%%%%

\begin{abstract}

\noindent
We give a simple proof of the strong maximum principle for viscosity subsolutions of
fully nonlinear elliptic PDEs on the form
$$
F(x,u,Du,D^2u) = 0
$$
under suitable structure conditions on the equation allowing for non-Lipschitz growth in the gradient terms.
In case of smooth boundaries, we also prove the Hopf lemma, the boundary Harnack inequality and that positive viscosity solutions vanishing on a portion of the boundary are comparable with the distance function near the boundary.
Our results apply to weak solutions of an eigenvalue problem for the variable exponent $p$-Laplacian. \\

\noindent
{\em Mathematics Subject Classification.} primary 35J60, secondary 35J25.\\

\noindent
{\it Keywords:} Osgood condition; general drift; non-lipschitz drift; non-standard growth; variable exponent; fully nonlinear; ball condition;  Laplace equation.
\end{abstract}

%%%%%%%%%%%%%%%%%%%%%%%%%%%%%%%%%%%%%%%%%%%%
%%%%%%%%%%%%%%%%%%%%%%%%%%%%%%%%%%%%%%%%%%%%
%%%%%%%%%%%%%%%%%%%%%%%%%%%%%%%%%%%%%%%%%%%%
%%%%%%%%%%%%%%%%%%%%%%%%%%%%%%%%%%%%%%%%%%%%
%%%%%%%%%%%%%%%%%%%%%%%%%%%%%%%%%%%%%%%%%%%%
%%%%%%%%%%%%%%%%%%%%%%%%%%%%%%%%%%%%%%%%%%%%
%%%%%%%%%%%%%%%%%%%%%%%%%%%%%%%%%%%%%%%%%%%%
%%%%%%%%%%%%%%%%%%%%%%%%%%%%%%%%%%%%%%%%%%%%
%%%%%%%%%%%%%%%%%%%%%%%%%%%%%%%%%%%%%%%%%%%%
%%%%%%%%%%%%%%%%%%%%%%%%%%%%%%%%%%%%%%%%%%%%

\section{Introduction}

\setcounter{theorem}{0}
\setcounter{equation}{0}

We consider fully nonlinear degenerate elliptic equations in nondivergence form
\begin{align}\label{eq:main}
F(x,u,Du,D^2u) = 0. \tag{$PDE$}
\end{align}
Here, $Du$ is the the gradient, $D^2u$ the hessian,
$F:\mathbb{R}^n \times \mathbb{R} \times \mathbb{R}^n  \times \mathcal{S}^n \rightarrow \mathbb{R}$ and $\mathcal{S}^n$ is the set of symmetric $n \times n$ matrices equipped with the positive semi-definite ordering;
for $X, Y \in S^n$, we write $X \leq Y$ if $\langle (X - Y) \xi, \xi \rangle \leq 0$ for all $\xi \in \mathbf{R}^n$.
We assume that $F$ satisfies the degenerate ellipticity
\begin{align}\label{eq:ass_deg_ellipt}
F(x,u,p,X) \geq  F(x,v,p,Y)  \quad \text{whenever} \quad u \geq v  \quad \text{and} \quad X \leq Y, \tag{$F_1$}
\end{align}
together with one of the following structure conditions which will depend on our results.
%We impose ellipticity by adding the assumption that for some $\lambda$, $\Lambda > 0$ we have
%%

%\begin{align}\label{eq:ass_ellipt}
%\lambda \text{Tr}(Y) \leq F(x,u,p,X) - F(x,u,p,X + Y) \leq \Lambda \text{Tr}(Y). \tag{$F_1$}
%\end{align}
%%
%whenever $Y$ is positive semi-definite.
%\komN{It is possible to add some more generality in the ellipticity assumption, e.g. $w(\text{Tr}(Y))$ in place of $\lambda \text{Tr}(Y)$ for some function $w$.}
%We also impose the standard monotonicity assumption
%%
%\begin{align}\label{eq:ass_non-decreasing_in_u}
%F(x, r, p, X) \leq F(x, s, p, X)  \quad \text{whenever} \quad  r \leq s, \tag{$F_2$}
%\end{align}
%%
%and note that \eqref{eq:ass_ellipt} and \eqref{eq:ass_non-decreasing_in_u} together makes $F$ \textit{proper} in the sense of %Crandall--Ishii--Lions \cite{CIL92}.

In order to prove the Strong Maximum Principle (SMaP) in Theorem \ref{thm:SMP} and Hopf lemma (Theorem \ref{corr:hopf}) we assume that
\begin{align}\label{eq:ass_drift_super}
%-F(x,0,p,0) \leq \phi(|p|) \quad \text{whenever} \quad x, p \in \mathbb{R}^n.
-F(x,0,p,X) \leq \phi(|p|) - \mathcal{P}^-_{\lambda,\Lambda}(X) \quad \text{whenever} \quad x, p \in \mathbb{R}^n, X \in \mathbb{S}^n,
\tag{$F_2 A$}
\end{align}
%
%whenever $x, p \in \mathbb{R}^n$,
where
\begin{align*}%\label{}
\mathcal{P}^-_{\lambda,\Lambda}(X) = -\Lambda \text{Tr}(X^+) + \lambda \text{Tr}(X^-)
\end{align*}
is the Pucci maximal operator (see Section \ref{sec:prel} for details),
$X = X^+ - X^-$ with $X^+ \geq 0$, $X^- \geq 0$ and $X^+  X^- = 0$ and $\phi : [0,\infty) \to [0,\infty)$ is a strictly increasing continuous function satisfying $\phi(t) \geq t$ and the Osgood condition
\begin{align}\label{eq:ass_phi_near}
\int_{0}^{1} \frac{dt}{\phi(t)} = \infty.
\tag{$\phi_A$}
\end{align}
Observe that when $\phi(t) > t$ the drift term is not Lipschitz continuous and the equation is nonhomogeneous.
A prototype example is the variable exponent $p$-Laplace equation
\begin{align*}
 - \nabla \cdot \left( |\nabla u|^{p(x)-2} \nabla u\right) = 0,
\end{align*}
where $1 < p^- \leq p(x) \leq p^+< \infty$ is Lipschitz continuous, satisfying our assumptions with $\phi(t) = C \left(|\log t| + 1\right) t$, $\lambda = \min\{ 1, p^{-} - 1\}$ and $\Lambda = \max\{ 1, p^{+} - 1\}$ (see Section \ref{sec:variable-exp}).
%To prove the SMaP we only have to deal with small gradients
%and therefore we need no assumptions on $\phi(t)$ for large $t$.
A counterexample by Julin \cite{J13}, see Remark \ref{remark:counter1}, shows that \eqref{eq:ass_phi_near} is necessary for SMaP, the maximum principle as well as for the comparison principle and uniqueness.

The above assumptions \eqref{eq:ass_deg_ellipt} and \eqref{eq:ass_drift_super} are implied by e.g. the standard ellipticity assumption that for some $\lambda$, $\Lambda > 0$ we have
\begin{align}\label{eq:ass_ellipt}
\lambda \text{Tr}(Y) \leq F(x,u,p,X) - F(x,u,p,X + Y) \leq \Lambda \text{Tr}(Y). %\tag{$F_1$}
\end{align}
whenever $Y$ is positive semi-definite, together with
\begin{align}\label{eq:ass_drift_super_old}
-F(x,0,p,0) \leq \phi(|p|) \quad \text{whenever} \quad x, p \in \mathbb{R}^n.
%\tag{$F_3 A$}
\end{align}
Let us also observe that assumptions \eqref{eq:ass_deg_ellipt} and \eqref{eq:ass_drift_super} allow nonlinear degenerate
elliptic operators, which do not satisfy \eqref{eq:ass_ellipt}, of the form
$$
F(X) = - \Lambda \left( \sum_{i=1}^{n} \Phi(\mu_i^+)\right) + \lambda \left( \sum_{i=1}^{n} \Psi(\mu_i^-) \right)
$$
where $\mu_i, i = 1,\dots,n$, are the eigenvalues of the matrix $X \in \mathbb{S}^n$ and $\Phi, \Psi : [0,\infty) \to [0,\infty)$
are continuous and nondecreasing functions such that $\Phi(s) \leq s \leq \Psi(s)$, see Capuzzo-Dolcetta--Vitolo \cite{CV07}.

The Strong Minimum Principle (SMiP) for viscosity supersolutions (Remark \ref{re:remark-minimum principles}), as well as some of our boundary estimates, require the symmetric assumption
\begin{align}\label{eq:ass_drift_sub}
F(x,0,p,X) \leq \phi(|p|) + \mathcal{P}^+_{\lambda,\Lambda}(X) \quad \text{whenever} \quad x, p \in \mathbb{R}^n, X \in \mathbb{S}^n,
\tag{$F_2 B$}
\end{align}
in place of \eqref{eq:ass_drift_super} where $\mathcal{P}^+_{\lambda,\Lambda}(X) = -\lambda \text{Tr}(X^+) + \Lambda \text{Tr}(X^-)$ is the Pucci minimal operator.

%see in the counterexamples presented in Remark \ref{remark:counter1} and \ref{remark:counter2}.

%%%%%%%%%%%%%%%%%%%%%%%%%%%%%%%%%%%%%%%%%%%%%
%%%%%%%%%%%%%%%%%%%%%%%%%%%%%%%%%%%%%%%%%%%%%
%%%%%%%%%%%%%%%%%%%%%%%%%%%%%%%%%%%%%%%%%%%%%

Strong Maximum Principles for solutions of PDEs have attracted lots of attention during the last decades.
For linear equations the SMaP dates back to Hopf (see Gilbarg--Trudinger \cite{GT} for a proof) 
and Calabi \cite{C58} considered generalized solutions of linear equations in nondivergence form.
For nonlinear problems, mainly modeled on the $p$-Laplacian on divergence form, 
see Pucci--Serrin \cite{PS07}.
Kawohl--Kutev \cite{KK98} and Bardi--Da Lio \cite{BD99, BD01, BD03} considered fully nonlinear degenerate elliptic PDEs, 
and Bardi--Goffi \cite{BG19} nonlinear subelliptic
PDEs modeled on H\"ormander vector fields.
For nonhomogeneous PDEs of variable exponent $p$-Laplace type, a proof was given by
Fan--Zhao--Zhang \cite{FZZ03} while Zhang \cite{Z05} considered a larger class of equations and proved a boundary Hopf point lemma for weak $C^1$-supersolutions. %, see Theorem 1.2 in \cite{Z05}. 
See also Wolanski \cite[Theorem 4.1]{W13} for a proof of SMaP for $C^1$-subsolutions.
Capuzzo-Dolcetta--Vitolo \cite{CV07} investigated the validity of the Alexandrov-Bakelman-Pucci maximum principle and of the Phragmen-Lindel\"of principle for fully nonlinear PDEs,
and
Julin \cite{J13} presented a sharp version of the Harnack inequality, quantifying the SMiP, for operators similar to those considered in this paper but without dependence on $u$.

The proof of our SMaP builds mainly on comparison %of the viscosity subsolution
with certain classical supersolutions of \eqref{eq:main} constructed with inspiration from Avelin-Julin \cite{AJ17}, see Section \ref{sec:auxiliary}.
%
%%%%%%%%%%%%%%%%%%%%%%%%%%%%%%%%%%%%%%%%%%%%%
%%%%%%%%%%%%%%%%%%%%%%%%%%%%%%%%%%%%%%%%%%%%%
%%%%%%%%%%%%%%%%%%%%%%%%%%%%%%%%%%%%%%%%%%%%%
%
Using similar classical solutions in a barrier argument inspired by Aikawa--Kilpel\"ainen--Shanmugalingam--Zhong~\cite{AKSZ07}, we also prove that in domains with $C^{1,1}$-boundary,
positive viscosity solutions vanishing on a portion of the boundary are comparable with
the distance function near the boundary (Corollary \ref{corr:dist}).
As a consequence, we obtain a boundary Harnack inequality
(Corollary \ref{thm:harnack}) for positive viscosity solutions to \eqref{eq:main}.

More specifically, our boundary estimate consists of a lower estimate (Theorem \ref{le:lower}) and an upper estimate (Theorem \ref{le:upper}) generalizing the main results of Adamowicz--Lundstr\"om \cite{AL16} to viscosity solutions of more general PDEs.
To prove the lower estimate we need a sligthly stronger structural assumption than above in order to build a positive barrier function (Lemma \ref{le:barrier_sub2}) and to this end we assume
\begin{align} \label{eq:ass_drift_sub_ast}
F(x,r,p,X) \leq \phi(|p|)  + \mathcal{P}^+_{\lambda,\Lambda}(X) + \gamma(r) \quad \text{whenever} \quad r \in \mathbb{R}_+, %(0, M^\ast],
x, p \in \mathbb{R}^n, X \in \mathbb{S}^n,
\tag{$F_2 B^\ast$}
\end{align}
in place of \eqref{eq:ass_drift_sub}. Here, $\phi$ is assumed to satisfy \eqref{eq:ass_phi_near} and $\gamma$ is a function dominated by $\phi$ in the sense that $\gamma(r) \leq C^\ast \phi(r)$ whenever $r \leq 1$.
%which is a stronger assumption than \eqref{eq:ass_drift_sub}.
%We refer to the bulk of the paper for more details on the matter.
To prove the upper estimate we need to deal with large gradients, forcing us to assume
that for every $\epsilon > 0$  small enough, it holds that
\begin{align}\label{eq:ass_phi_inf_new}
 \int_{0}^{\epsilon} f(t) dt \to \infty  \quad\text{as} \quad \nu \to \infty \tag{$\phi_B$}
\end{align}
where $f(t)$ solves the differential equation $f'(t) = - \phi(f(t))$ with initial condition $f(0) = \nu$.

Condition \eqref{eq:ass_phi_inf_new},
whose necessity will be shown by a counterexample in Remark \ref{re:necessity-phi-inf},
holds, e.g., if there are $s_0$ and $C$ such that
\begin{align}\label{eq:ass_Cs^2}
\phi(s) \leq C s^2, \qquad \text{whenever} \qquad s \geq s_0,
\end{align}
or if the Keller-Osserman type condition,
\begin{align}\label{eq:ass_phi_inf}
\int_{1}^{\infty} \frac{dt}{\phi(t)} = \infty
%\tag{$\phi_A$}
\end{align}
holds, see Remark \ref{re:assumption-phi-inf}.

The studies of boundary Harnack type inequalities for solutions of differential equations have a long history.
In the setting of harmonic functions on Lipschitz domains, such a result
was proposed by Kemper \cite{K72} and later studied by Ancona \cite{An78}, Dahlberg \cite{D77} and
Wu \cite{W78}. Subsequently, Kemper’s result was extended by Caffarelli--Fabes--Mortola--Salsa \cite{CFMS81} to a class
of elliptic equations, by Jerison--Kenig \cite{JK82} to the setting of nontangentially accessible domains,
Banuelos--Bass--Burdzy \cite{BBB91} and Bass--Burdzy \cite{BB91} considered H\"older domains while Aikawa \cite{Ai01} studied uniform domains.
Recently, Silva--Savin \cite{SS20} gave a short proof of a boundary Harnack inequality for solutions to uniformly elliptic equations in divergence and non-divergence form.
Concerning nonlinear PDEs,
Aikawa--Kilpel\"ainen--Shanmugalingam--Zhong~\cite{AKSZ07} proved similar results in the setting of positive $p$-harmonic functions while in the same year Lewis--Nystr\"om~\cite{LN07, LN10, LN12} started to develop a theory for proving boundary estimates such as the boundary Harnack inequality for $p$-Laplace type equations in more general geometries such as Lipschitz and Reifenberg-flat domains, and
Lundstr\"om \cite{N11, N16} estimated the growth of positive $p$-harmonic functions,
$n-m < p \leq \infty$,
vanishing on an $m$-dimensional hyperplane in $\mathbb{R}^n$, $0 \leq m \leq n-1$.
For nonhomogeneous equations Adamowicz--Lundstr\"om \cite{AL16} proved similar boundary estimates as we do here
but for positive $p(x)$-harmonic functions, and
Avelin--Julin \cite{AJ17} proved a sharp boundary Harnack inequality for operators similar to those considered in this paper but without dependence on $u$.
Concerning applications of boundary Harnack type inequalities we mention free boundary problems and
studies of the Martin boundary, see e.g. \cite{LN10, LN12}.

%%%%%%%%%%%%%%%%%%%%%%%%%%%%%%%%%%%%%%%%%%%%%
%%%%%%%%%%%%%%%%%%%%%%%%%%%%%%%%%%%%%%%%%%%%%
%%%%%%%%%%%%%%%%%%%%%%%%%%%%%%%%%%%%%%%%%%%%%

We end the paper by showing that our results holds for weak solutions of an eigenvalue problem for the variable exponent $p$-Laplacian,
for which we also give a proof of the comparison principle.

%%%%%%%%%%%%%%%%%%%%%%%%%%%%%%%%%%%%%%%%%%%%
%%%%%%%%%%%%%%%%%%%%%%%%%%%%%%%%%%%%%%%%%%%%
%%%%%%%%%%%%%%%%%%%%%%%%%%%%%%%%%%%%%%%%%%%%
%%%%%%%%%%%%%%%%%%%%%%%%%%%%%%%%%%%%%%%%%%%%
%%%%%%%%%%%%%%%%%%%%%%%%%%%%%%%%%%%%%%%%%%%%
%%%%%%%%%%%%%%%%%%%%%%%%%%%%%%%%%%%%%%%%%%%%
%%%%%%%%%%%%%%%%%%%%%%%%%%%%%%%%%%%%%%%%%%%%
%%%%%%%%%%%%%%%%%%%%%%%%%%%%%%%%%%%%%%%%%%%%
%%%%%%%%%%%%%%%%%%%%%%%%%%%%%%%%%%%%%%%%%%%%
%%%%%%%%%%%%%%%%%%%%%%%%%%%%%%%%%%%%%%%%%%%%

\setcounter{theorem}{0}
\setcounter{equation}{0}

\section{Preliminaries}
\label{sec:prel}

By $\Omega$ we denote a domain, that is, an open connected set.
For a set $E \subset \mathbf{R}^n$
we let $\overline{E}$ denote the closure,
$\partial E$ the boundary and
$\complement E$ the complement of $E$. %and we put $E^o = E\setminus \partial E$.
Further, $d(x,E)$ denotes the Euclidean distance from $x \in \mathbf{R}^n$ to $E$, and
$B(x,r) = \{ y : | x  -  y | < r \}$
denotes the open ball with radius $r$ and center $x$.
By $c$ we denote a constant $\geq 1$, not necessarily the same at each occurrence.
%depending only on $n$ and $p$ if nothing else is mentioned.
Moreover, $c(a_1, a_2, \dots, a_k)$ denotes a constant $\geq 1$, not necessarily the same at each occurrence,
depending only on $a_1, a_2, \dots, a_k$.
%and we write $A \approx B$ if there exists a constant $c$ such that $c^{-1}A \leq B\leq cA$.
%We denote points in Euclidean $n$-space $\mathbf R^{n}$ by
%$x = (x_1,x_2,\dots,x_n) = (x',x'')$, where
%%
%\begin{align}\label{eq:notation-x'-and-x''}
%x' = (x_1, x_2, \dots, x_{n-m}) \quad \textrm{and} \quad x'' = (x_{n-m+1}, x_{n-m+2},\dots, x_n).
%\end{align}
%%
%Finally, we write $N = \{1,2,3,\dots\}$ for the set of natural numbers.

Let $X \in \mathbf{S}^n$ have eigenvalues $e_1,e_2, \dots ,e_n$.
The Pucci extremal operators $\mathcal{P}^+_{\lambda,\Lambda}$ and $\mathcal{P}^-_{\lambda,\Lambda}$ with ellipticity constants $0 < \lambda \leq \Lambda$ are defined by
$$
\mathcal{P}^+_{\lambda,\Lambda}(X) := - \lambda \sum_{e_i \geq 0} e_i - \Lambda\sum_{e_i < 0} e_i \quad \text{and} \quad \mathcal{P}^-_{\lambda,\Lambda}(X) := - \Lambda \sum_{e_i \geq 0} e_i - \lambda \sum_{e_i < 0} e_i.
$$
For properties of the Pucci operators see e.g. Caffarelli--Cabre \cite{CC95} or Capuzzo-Dolcetta--Vitolo \cite{CV07}.

We next recall the standard definition of  viscosity solutions.
An upper semicontinuous function $u : \Omega \to \mathbb{R}$ is a viscosity subsolution %of \eqref{eq:main} in $\Omega$
if for any $\varphi \in C^2(\Omega)$ and any $x_0 \in \Omega$
such that $u - \varphi$ has a local maximum at $x_0$
it holds that
\begin{align}\label{eq:viscos1}
F(x_0,u(x_0),D\varphi(x_0),D^2\varphi(x_0)) \leq 0.
\end{align}
An lower semicontinuous function $u : \Omega \to \mathbb{R}$ is a viscosity supersolution %of \eqref{eq:main} in $\Omega$
if for any $\varphi \in C^2(\Omega)$ and any $x_0 \in \Omega$
such that $u - \varphi$ has a local minimum at $x_0$
it holds that
\begin{align}\label{eq:viscos2}
F(x_0,u(x_0),D\varphi(x_0),D^2\varphi(x_0)) \geq 0.
\end{align}
A continuous function is a viscosity solution if it is both a viscosity sub- and viscosity supersolution.
In the following we sometimes drop the word ``viscosity" and simply write subsolution, solution and supersolution.
We say that a subsolution (supersolution) is strict in a domain $\Omega$ if the equality in \eqref{eq:viscos1} (\eqref{eq:viscos2}) is strict,
and we say that a subsolution, supersolution or solution is classical if it is twice differentiable in $\Omega$.

Let $u$ be a subsolution and $v$ a supersolution to \eqref{eq:main} and let $a$ and $b$ be constants.
As \eqref{eq:main} is not homogeneous,  $a + b u$ and $a + b v$ are not necessarily sub- and supersolutions.
However, degenerate ellipticity \eqref{eq:ass_deg_ellipt} guarantees that $u - c$ is a subsolution
and $v + c$ is a supersolution whenever $c \geq 0$ is a constant.

We will not discuss the validity of a general comparison principle for viscosity solutions of \eqref{eq:main} since our
proofs only require the comparison of viscosity solutions to \textit{classical} strict sub- and supersolutions, a comparison which is always possible.
Indeed, let $\Omega$ be a bounded domain,
$u$ a viscosity subsolution, and $v$ a classical strict supersolution in $\Omega$. Assume that
$u \leq v$ on $\partial \Omega$ and that $u \geq v$ somewhere in $\Omega$.
By USC the function $u-v$ attains a maximum at some point $x_0 \in \Omega$.
Since $v \in C^2(\Omega)$, $u - v$ has a maximum at $x_0$ and $u$ is a viscosity subsolution it follows by definition of viscosity solutions that
\begin{align}\label{eq:johej-lemma1}
F(x_0, u(x_0), Dv(x_0), D^2 v(x_0)) \leq 0.
\end{align}
But since $v$ is a classical strict supersolution we have $F(x,v(x), Dv(x), D^2v(x)) > 0$ whenever $x \in \Omega$,
and as $u(x_0) \geq v(x_0)$ it follows from \eqref{eq:ass_deg_ellipt} that
%
%\begin{align}%\label{eq:}
$
F(x_0, u(x_0), Dv(x_0), D^2 v(x_0)) \geq F(x_0, v(x_0), Dv(x_0), D^2 v(x_0)) > 0.
$
%\end{align}
%
This contradicts \eqref{eq:johej-lemma1} and hence we have proved the following simple lemma.

\begin{lemma}\label{le:comp-weak}
Let $\Omega$ be a bounded domain, $u \in USC(\overline{\Omega})$ a viscosity subsolution and $v \in LSC(\overline{\Omega})$ a viscosity supersolution to \eqref{eq:main}, satisfying $u \leq v$ on $\partial \Omega$.
Assume \eqref{eq:ass_deg_ellipt}.
If either $u$ is a strict classical subsolution, or $v$ is a strict classical supersolution,
then $u < v$ in $\Omega$.
\end{lemma}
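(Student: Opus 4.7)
The plan is to argue by contradiction along the lines already sketched in the paragraph preceding the statement, and then observe that the second case follows by a fully symmetric argument.

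Suppose the conclusion fails, so that $u(x) \geq v(x)$ at some $x \in \Omega$. Since $u \in USC(\overline{\Omega})$ and $-v \in USC(\overline{\Omega})$, the function $u-v$ is upper semicontinuous on the compact set $\overline{\Omega}$ and therefore attains its maximum at some point $x_0 \in \overline{\Omega}$. This maximum value is nonnegative by the contradictory assumption, while $u-v \leq 0$ on $\partial\Omega$, so $x_0 \in \Omega$.

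First I would treat the case that $v$ is a classical strict supersolution. Then $\varphi := v \in C^2(\Omega)$ and $u-\varphi$ has a local maximum at $x_0$, so by the viscosity subsolution property of $u$,
\[
F(x_0, u(x_0), Dv(x_0), D^2 v(x_0)) \leq 0.
\]
On the other hand, $v$ being a classical strict supersolution gives $F(x_0, v(x_0), Dv(x_0), D^2 v(x_0)) > 0$, and since $u(x_0) \geq v(x_0)$ the degenerate ellipticity assumption \eqref{eq:ass_deg_ellipt} yields
\[
F(x_0, u(x_0), Dv(x_0), D^2 v(x_0)) \geq F(x_0, v(x_0), Dv(x_0), D^2 v(x_0)) > 0,
\]
which is the desired contradiction.

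For the symmetric case, suppose instead that $u$ is a classical strict subsolution. Reversing roles, $v-u$ attains its minimum on $\overline{\Omega}$ at an interior point $x_0 \in \Omega$ (since this minimum is nonpositive by assumption and $v-u \geq 0$ on $\partial\Omega$). Testing the viscosity supersolution $v$ against $\varphi := u \in C^2(\Omega)$ gives $F(x_0, v(x_0), Du(x_0), D^2 u(x_0)) \geq 0$, while the strict classical subsolution inequality combined with $v(x_0) \leq u(x_0)$ and \eqref{eq:ass_deg_ellipt} gives $F(x_0, v(x_0), Du(x_0), D^2 u(x_0)) \leq F(x_0, u(x_0), Du(x_0), D^2 u(x_0)) < 0$, contradicting the previous inequality. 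No real obstacle is expected: the whole argument is essentially a two-line semicontinuity-plus-ellipticity computation. The only subtlety worth flagging is verifying that the extremal point of $u-v$ (or $v-u$) lies in the interior, which the boundary hypothesis $u\leq v$ on $\partial\Omega$ handles immediately.
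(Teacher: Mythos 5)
Your proof is correct and follows essentially the same argument the paper gives in the paragraph preceding the lemma: locate an interior maximizer of $u-v$ by semicontinuity and the boundary inequality, test the viscosity subsolution against the classical supersolution, and derive a contradiction from \eqref{eq:ass_deg_ellipt}. You additionally write out the symmetric case (strict classical subsolution), which the paper leaves implicit; that is fine and adds nothing problematic.
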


%%%%%%%%%%%%%%%%%%%%%%%%%%%%%%%%%%%%%%%%%%%%%
%%%%%%%%%%%%%%%%%%%%%%%%%%%%%%%%%%%%%%%%%%%%%
%%%%%%%%%%%%%%%%%%%%%%%%%%%%%%%%%%%%%%%%%%%%%
%
%\noindent
%\begin{proof}
%Assume that $v$ is a classical strict supersolution (the other case is similar).
%Assume that $u \geq v$ somewhere in $\Omega$.
%By USC the function $u-v$ attains a maximum at some point $x_0 \in \Omega$.
%Since $v \in C^2(\Omega)$, $u - v$ has a maximum at $x_0$ and $u$ is a viscosity subsolution it follows that
%%
%\begin{align}\label{eq:johej-lemma1}
%F(x_0, u(x_0), Dv(x_0), D^2 v(x_0)) \leq 0.
%\end{align}
%%
%By assumption $F(x,v(x), Dv(x), D^2v(x)) > 0$ whenever $x \in \Omega$.
%As $u(x_0) \geq v(x_0)$ it follows from \eqref{eq:ass_non-decreasing_in_u} that
%%
%\begin{align}%\label{eq:}
%F(x_0, u(x_0), Dv(x_0), D^2 v(x_0)) \geq F(x_0, v(x_0), Dv(x_0), D^2 v(x_0)) > 0,
%\end{align}
%%
%which contradicts \eqref{eq:johej-lemma1}.
%\end{proof}

%%%%%%%%%%%%%%%%%%%%%%%%%%%%%%%%%%%%%%%%%%%%%
%%%%%%%%%%%%%%%%%%%%%%%%%%%%%%%%%%%%%%%%%%%%%
%%%%%%%%%%%%%%%%%%%%%%%%%%%%%%%%%%%%%%%%%%%%%

When dealing with boundary estimates we make use of the following ball conditions.
%\begin{definition}\label{def:ball}
A point $w \in \partial \Omega$, where $\Omega\subset \mathbb{R}^n$ is a domain,
satisfies the \emph{interior ball condition} with radius $r_i > 0$ if
there exists $\eta^i \in \Omega$ such that $B(\eta^i, r_i) \subset \Omega$ and
$\partial B(\eta^i, r_i) \cap \partial \Omega = \{w\}$.
Similarly, $w\in \partial \Omega$ satisfies the \emph{exterior ball condition}
with radius $r_e > 0$ if there exists $\eta^e \in \mathbb{R}^n \setminus \Omega$
such that $B(\eta^e, r_e) \subset \mathbb{R}^n \setminus \Omega$ and $\partial B(\eta^e, r_e) \cap \partial \Omega = \{w\}$.
A point $w \in \partial \Omega$ satisfies the \emph{ball condition} with radius $r_b$
if it satisfies both the interior ball condition and the exterior ball condition with radius $r_b$.
A domain $\Omega\subset \mathbb{R}^n$ is said to satisfy the \emph{ball condition (or interior or exterior ball condition)},
if such condition holds for all $w\in \partial \Omega$.
%\end{definition}

It is well known that $\Omega\subset \mathbb{R}^n$ satisfies the ball condition if and only if $\Omega$ is a $C^{1,1}$-domain,
see Aikawa--Kilpel\"ainen--Shanmugalingam--Zhong \cite[Lemma 2.2]{AKSZ07} for a proof.

%%%%%%%%%%%%%%%%%%%%%%%%%%%%%%%%%%%%%%%%%%%%
%%%%%%%%%%%%%%%%%%%%%%%%%%%%%%%%%%%%%%%%%%%%
%%%%%%%%%%%%%%%%%%%%%%%%%%%%%%%%%%%%%%%%%%%%
%%%%%%%%%%%%%%%%%%%%%%%%%%%%%%%%%%%%%%%%%%%%
%%%%%%%%%%%%%%%%%%%%%%%%%%%%%%%%%%%%%%%%%%%%
%%%%%%%%%%%%%%%%%%%%%%%%%%%%%%%%%%%%%%%%%%%%
%%%%%%%%%%%%%%%%%%%%%%%%%%%%%%%%%%%%%%%%%%%%
%%%%%%%%%%%%%%%%%%%%%%%%%%%%%%%%%%%%%%%%%%%%
%%%%%%%%%%%%%%%%%%%%%%%%%%%%%%%%%%%%%%%%%%%%
%%%%%%%%%%%%%%%%%%%%%%%%%%%%%%%%%%%%%%%%%%%%

\setcounter{theorem}{0}
\setcounter{equation}{0}

\section{Construction of auxiliary functions}
\label{sec:auxiliary}

This section is devoted to the construction of some classical strict sub- and supersolutions to \eqref{eq:main}
which will be used in our proofs of the maximum principles and the boundary estimates.
The first lemma yields negative subsolutions and positive supersolutions with arbitrary small gradients in an annulus,
needed in the proof of Hopf lemma, the SMaP and SMiP.
\begin{lemma}%[First subsolution]
\label{le:barrier_sub}
Assume that \eqref{eq:ass_deg_ellipt}, \eqref{eq:ass_drift_sub} and \eqref{eq:ass_phi_near} hold.
Suppose that $r \leq r^\ast$, $0 < M$ and $y \in \mathbf{R}^n$ are given and let $A = B(y,2r) \setminus \overline{B(y, r)}$.
Then there exist positive $m(\lambda, \Lambda, n,\phi, r^\ast) \leq {M}$, $\mu(\lambda, \Lambda, n, \phi, r^\ast, M)$ and a strict classical subsolution $\check{u}$ to \eqref{eq:main} in $A$ such that
\begin{align*}
\check{u} = 0 \;\text{on}\;\; \partial B(y, r), \quad  \check{u} = -mr \;\text{on}\;\;\partial B(y, 2r) \quad \text{and}\quad \mu \leq |D \check{u}(x)| \leq \mu^{-1} \quad \text{in}\;\; A.
\end{align*}
Moreover, if we assume \eqref{eq:ass_drift_super} in place of \eqref{eq:ass_drift_sub},
then $\hat{v} = -\check{u}$ is a strict classical supersolution to \eqref{eq:main} in  $A$.
\end{lemma}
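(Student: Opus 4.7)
I would look for $\check u$ in the radial form $\check u(x) = g(|x-y|)$, with $\rho := |x-y|$ running over $[r,2r]$ and $g$ to be determined by an ODE on $[r,2r]$ subject to the two boundary conditions $g(r)=0$ and $g(2r)=-mr$. For such a radial ansatz the gradient has length $|g'(\rho)|$ and the Hessian $D^2\check u$ has eigenvalue $g''(\rho)$ once (radial) and $g'(\rho)/\rho$ with multiplicity $n-1$ (tangential). Since the target is $g$ decreasing from $0$ to $-mr$, I will look for $g$ with $g'<0$ and $g''>0$. Writing $h:=-g'>0$ it is then immediate that the single positive eigenvalue of $D^2\check u$ equals $g''$ and the $n-1$ negative eigenvalues equal $g'/\rho=-h/\rho$, so
\begin{equation*}
\mathcal{P}^+_{\lambda,\Lambda}(D^2\check u) \;=\; -\lambda g''(\rho)+\Lambda(n-1)\,\frac{h(\rho)}{\rho}.
\end{equation*}
Because $\check u\le 0$, degenerate ellipticity \eqref{eq:ass_deg_ellipt} and \eqref{eq:ass_drift_sub} give
\begin{equation*}
F(x,\check u,D\check u,D^2\check u)\;\le\;F(x,0,D\check u,D^2\check u)\;\le\;\phi(h(\rho))-\lambda g''(\rho)+\Lambda(n-1)\frac{h(\rho)}{\rho},
\end{equation*}
so it is enough to force the strict inequality
\begin{equation}\label{eq:pf-target}
-\lambda h'(\rho) \;>\; \phi(h(\rho))+\Lambda(n-1)\,\frac{h(\rho)}{\rho} \qquad \text{for }\rho\in(r,2r).
\end{equation}

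\textbf{Construction of $h$.} To build such an $h$ I would prescribe it as the solution of the initial value problem
\begin{equation*}
\lambda h'(\rho)\;=\;-2\phi(h(\rho))-2\Lambda(n-1)\,\frac{h(\rho)}{\rho},\qquad h(r)=h_0,
\end{equation*}
for an $h_0\in(0,M]$ to be fixed. The right-hand side is locally Lipschitz in $h$ for $h>0$, and \eqref{eq:pf-target} holds automatically with the factor $2$ as slack as long as $h$ remains positive. Then $g(\rho):=-\int_r^{\rho}h(t)\,dt$ is $C^2$, satisfies $g(r)=0$, and yields $m=\frac{1}{r}\int_r^{2r}h(t)\,dt$, which lies between the (soon-to-be-proved) lower bound on $h$ and $h_0$; choosing $h_0\le M$ enforces $m\le M$.

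\textbf{Main difficulty: keeping $h$ positive on $[r,2r]$.} This is the step that uses the Osgood condition \eqref{eq:ass_phi_near}, and is really the heart of the lemma. Using $\phi(t)\ge t$ I bound $2\phi(h)+2\Lambda(n-1)h/\rho\le 2\phi(h)\bigl(1+\Lambda(n-1)/r\bigr)$ on $[r,2r]$, so separating variables and integrating from $\rho$ back to $r$ yields
\begin{equation*}
\int_{h(\rho)}^{h_0}\frac{ds}{\phi(s)}\;\le\;\frac{2(r+\Lambda(n-1))}{\lambda}\;\le\;\frac{2(r^{\ast}+\Lambda(n-1))}{\lambda}\;=:\;C_\ast(\lambda,\Lambda,n,r^\ast).
\end{equation*}
By \eqref{eq:ass_phi_near}, $\int_0^{h_0}ds/\phi(s)=\infty$, so there is $h^{\ast}=h^{\ast}(\lambda,\Lambda,n,\phi,r^{\ast},h_0)>0$ with $\int_{h^{\ast}}^{h_0}ds/\phi(s)=C_\ast$, hence $h(\rho)\ge h^\ast$ on $[r,2r]$. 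Since $h$ is also decreasing, $h^{\ast}\le h(\rho)\le h_0$, which gives the two-sided gradient bound with $\mu:=\min(h^\ast,1/h_0)$. Fixing a default $h_0^{0}=h_0^{0}(\lambda,\Lambda,n,\phi,r^\ast)$ and then taking $h_0=\min(M,h_0^{0})$ gives $m\le M$ with the claimed dependencies. For the ``moreover'' part I would simply set $\hat v=-\check u$: then $\hat v\ge 0$, $|D\hat v|=h$, and the eigenvalues of $D^2\hat v$ are $-g''<0$ (once) and $h/\rho>0$ ($n-1$ times), so \eqref{eq:ass_drift_super} combined with \eqref{eq:ass_deg_ellipt} gives $F(x,\hat v,D\hat v,D^2\hat v)\ge -\phi(h)+\lambda g''-\Lambda(n-1)h/\rho$, which is exactly the positive version of \eqref{eq:pf-target} and is therefore strictly positive by construction.
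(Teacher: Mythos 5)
Your proposal is correct and follows essentially the same route as the paper: a radial barrier whose gradient magnitude solves a first-order ODE driven by $\phi$, with the Osgood condition \eqref{eq:ass_phi_near} keeping that gradient in a compact subinterval of $(0,\infty)$ and the small initial datum giving $m\le M$; the paper merely writes the ODE autonomously as $g'=C\phi(g)$ in the normalized variable $2-|x-y|/r$ and absorbs the curvature term $\Lambda(n-1)/\rho$ into the constant $C$ a posteriori. One cosmetic caveat: since $\phi$ is only assumed continuous, the right-hand side of your IVP need not be locally Lipschitz, but Peano existence combined with your a priori bounds $h^\ast\le h\le h_0$ is all that is required (the paper sidesteps this by defining its $g$ implicitly via $t=\int_\mu^{g(t)}\frac{ds}{C\phi(s)}$).
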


%%%%%%%%%%%%%%%%%%%%%%%%%%%%%%%%%%%%%%%%%%%%%
%%%%%%%%%%%%%%%%%%%%%%%%%%%%%%%%%%%%%%%%%%%%%
%%%%%%%%%%%%%%%%%%%%%%%%%%%%%%%%%%%%%%%%%%%%%

In the above lemma, it is crucial that $M>0$ is arbitrary and that $0< m \leq M$, meaning that we can construct ``arbitrary flat" strict sub- and supersolutions.
Together with \eqref{eq:ass_deg_ellipt} this
allows us to find a strict classical subsolution $\check{u}$ and supersolution $\hat{v}$ in $A$ satisfying
\begin{align}\label{eq:remark1}
\hat{v} = - \check{u} = \tilde M - mr > 0 \quad \text{on}\;\; \partial B(y, r) \quad \mbox{and} \quad \hat{v} = - \check{u} = \tilde M \quad \text{on}\;\;\partial B(y, 2r)
\end{align}
whenever $\tilde M > 0$ is given.

%%%%%%%%%%%%%%%%%%%%%%%%%%%%%%%%%%%%%%%%%%%%%
%%%%%%%%%%%%%%%%%%%%%%%%%%%%%%%%%%%%%%%%%%%%%
%%%%%%%%%%%%%%%%%%%%%%%%%%%%%%%%%%%%%%%%%%%%%

In order to prove the lower boundary growth estimate for positive solutions to \eqref{eq:main} we need a positive barrier from below
in terms of a strict classical subsolution.
The following lemma, which is similar to Lemma \ref{le:barrier_sub}, shows that this is possible.
Note however that,
contrary to Lemma \ref{le:barrier_sub}, we now need to impose some extra control on the growth of $F(x,r,p,X)$ in the parameter $r$.
Namely, we need \eqref{eq:ass_drift_sub_ast} in place of \eqref{eq:ass_drift_sub}.
Due to the generality of the function $\gamma$ in \eqref{eq:ass_drift_sub_ast} we need to shrink the size of the annulus in order to handle the lower order terms in $\gamma(u)$.

\begin{lemma}
\label{le:barrier_sub2}
Assume that \eqref{eq:ass_deg_ellipt}, \eqref{eq:ass_drift_sub_ast} and \eqref{eq:ass_phi_near} hold.
Suppose that $r\leq r^\ast$, $0 < M$ and $y \in \mathbf{R}^n$ are given and let $A = B(y,2r) \setminus \overline{B(y, r)}$.
Then there exist positive $m(\lambda, \Lambda, n,\phi, r^\ast, C^\ast) \leq M$, $\mu(\lambda, \Lambda, n, \phi, r^\ast, C^\ast, M)$
and a strict classical subsolution $\doublecheck{u}$ to \eqref{eq:main} in $A$ such that
\begin{align*}
\doublecheck{u} = m r \;\text{on}\;\; \partial B(y, r), \quad  \doublecheck{u} = 0 \;\text{on}\;\;\partial B(y, 2r) \quad \text{and}\quad \mu \leq |D \doublecheck{u}(x)| \leq \mu^{-1} \quad \text{in}\;\; A.
\end{align*}
\end{lemma}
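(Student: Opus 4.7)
The plan is to mimic the radial construction used for Lemma \ref{le:barrier_sub}, shrinking $r^\ast$ as necessary so that the extra $\gamma(u)$ term appearing in \eqref{eq:ass_drift_sub_ast} can be absorbed. I seek $\doublecheck{u}(x) = g(|x-y|)$ with $g : [r, 2r] \to [0, mr]$ smooth, strictly decreasing and strictly convex, $g(r) = mr$, $g(2r) = 0$. Writing $f := -g' > 0$, the Hessian $D^2 \doublecheck{u}$ has eigenvalues $-f'(s)$ (radial, multiplicity one) and $-f(s)/s$ (tangential, multiplicity $n-1$), while $|D\doublecheck{u}| = f(s)$. With $f$ strictly decreasing (so $g'' > 0$), $D^2\doublecheck{u}$ has one positive and $n-1$ negative eigenvalues, so the definition of $\mathcal{P}^+_{\lambda,\Lambda}$ gives
$$\mathcal{P}^+_{\lambda,\Lambda}(D^2 \doublecheck{u}) = \lambda f'(s) + \Lambda(n-1)\frac{f(s)}{s},$$
and assumption \eqref{eq:ass_drift_sub_ast} yields
$$F(x, \doublecheck{u}, D\doublecheck{u}, D^2\doublecheck{u}) \le \phi(f(s)) + \lambda f'(s) + \Lambda(n-1)\frac{f(s)}{s} + \gamma(g(s)).$$

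Next I would define $f$ as the solution of
$$\lambda f'(s) = -K\,\phi(f(s)) - \Lambda(n-1)\frac{f(s)}{s}, \qquad f(r) = \nu_0,$$
for a fixed $K > 1 + C^\ast$ (depending only on $C^\ast$) and $\nu_0 > 0$ to be chosen. The tangential term then cancels in the preceding estimate, leaving
$$F(x, \doublecheck{u}, D\doublecheck{u}, D^2\doublecheck{u}) \le (1 - K)\phi(f(s)) + \gamma(g(s)).$$
To handle $\gamma(g(s))$, observe that $g(s) = \int_s^{2r} f(t)\,dt \le r f(s) \le r^\ast f(s)$ by monotonicity of $f$. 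Demanding $r^\ast \le 1$ and $M r^\ast \le 1$ then guarantees $g(s) \le \min\{f(s), 1\}$, so the hypothesis $\gamma(\rho) \le C^\ast \phi(\rho)$ on $[0,1]$ together with the monotonicity of $\phi$ gives $\gamma(g(s)) \le C^\ast \phi(f(s))$. The bound collapses to $F \le (1 - K + C^\ast)\phi(f(s)) \le -c_0 < 0$, so $\doublecheck{u}$ is a strict classical subsolution.

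It remains to secure the boundary values and the two-sided gradient bound. Setting $g(s) := \int_s^{2r} f(t)\, dt$ automatically yields $g(2r) = 0$, while $m := g(r)/r = r^{-1}\int_r^{2r} f(t)\,dt$. Well-posedness and strict positivity of $f$ on all of $[r, 2r]$ follow from \eqref{eq:ass_phi_near}: the Osgood condition prevents the nonlinear part from driving $f$ to zero in finite time, and the linear contribution $\Lambda(n-1)f/(\lambda s)$ alone likewise cannot. This yields a two-sided bound $\mu \le f \le \mu^{-1}$ on $[r, 2r]$ with $\mu > 0$ depending on $\nu_0$ and the structural constants. Since $f$ is decreasing we have $m \le \nu_0$, so choosing $\nu_0$ small enough (depending on $\lambda, \Lambda, n, \phi, r^\ast, C^\ast$ and $M$) delivers $m \le M$, and $|D\doublecheck{u}| = f$ furnishes the claimed gradient estimate.

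The main obstacle is the coupling between $f$ and $g$ through $\gamma(g)$: one must simultaneously keep $g$ in the regime $[0,1]$ where $\gamma \le C^\ast \phi$ applies, while leaving the ODE enough flexibility to satisfy the prescribed boundary data and gradient bound. This coupling is precisely what makes the size restriction $r \le r^\ast$, with $r^\ast$ small depending on $C^\ast$ and $M$, unavoidable, and what forces $C^\ast$ to appear in the dependencies of $m$ and $\mu$, in contrast to Lemma \ref{le:barrier_sub}.
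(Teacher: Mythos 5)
Your construction is correct and follows essentially the same route as the paper: a radial barrier whose profile solves an Osgood-type ODE driven by $\phi$ (the paper writes it in the normalized variable $t=2-|x-y|/r$ and dominates the tangential Hessian term by a large constant $C$, while you cancel it exactly inside the ODE — a cosmetic difference), with the zeroth-order term absorbed via $\doublecheck{u}\le r\,|D\doublecheck{u}|$ together with $\gamma\le C^\ast\phi$ on $[0,1]$. The only inessential deviation is your extra demand $Mr^\ast\le 1$: since $g(s)\le g(r)=mr\le \nu_0 r^\ast$, shrinking $\nu_0$ already forces $g\le 1$, so no smallness of $r^\ast$ relative to $M$ is required, matching the paper, which only uses $r\le 1$.
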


Next lemma yields a strict classical supersolution that is zero on the inner ball of the annulus
and as large as we need on the outer ball.
The proof therefore needs \eqref{eq:ass_phi_inf}, which is an assumption on $\phi(t)$ for large $t$, to handle the large gradients.

%%%%%%%%%%%%%%%%%%%%%%%%%%%%%%%%%%%%%%%%%%%%
%%%%%%%%%%%%%%%%%%%%%%%%%%%%%%%%%%%%%%%%%%%%
%%%%%%%%%%%%%%%%%%%%%%%%%%%%%%%%%%%%%%%%%%%%
%
\begin{lemma}
\label{le:barrier_super}
Assume that \eqref{eq:ass_deg_ellipt}, \eqref{eq:ass_drift_super} and \eqref{eq:ass_phi_inf_new} hold.
Suppose that $r\leq r^\ast$, %\in (0,r_\ast]$ for some small $r_\ast$
$0 < M$ and $y \in \mathbf{R}^n$ are given.
Then there exist positive $m(\lambda, \Lambda, n, \phi, r^\ast) \geq {M}$, $\nu(\lambda, \Lambda, n, \phi, r^\ast, M)$, $k(\lambda, \Lambda, n,\phi, r^\ast, M) > 1$ and a strict classical supersolution $\doublehat{v}$ to \eqref{eq:main} in $A_k = B(y,kr) \setminus \overline{B(y, r)}$ such that
\begin{align*}
\doublehat{v} = 0 \;\text{on}\;\; \partial B(y, r), \quad  \doublecheck{v} = m r \;\text{on}\;\;\partial B(y, kr) \quad \text{and}\quad \nu^{-1}\leq |D \doublehat{v}(x)| \leq \nu \quad \text{in}\;\; A.
\end{align*}
\end{lemma}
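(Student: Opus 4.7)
The plan is to construct $\doublehat{v}$ as a radial profile $\doublehat{v}(x)=g(|x-y|)$ with $g$ smooth, strictly increasing, and concave on $[r,kr]$, and $g(r)=0$. For such a $g$ we have $\doublehat{v}\ge 0$, $|D\doublehat{v}|=g'$, and the eigenvalues of $D^2\doublehat{v}$ are $g''$ (negative, multiplicity one) and $g'/s$ (positive, multiplicity $n-1$). Using \eqref{eq:ass_deg_ellipt} to replace $\doublehat{v}$ by $0$ in $F$, and then \eqref{eq:ass_drift_super} to get $F(x,0,p,X)\ge \mathcal{P}^-_{\lambda,\Lambda}(X)-\phi(|p|)$, the strict supersolution property reduces to the scalar inequality
\[
\lambda\,|g''(s)|>\Lambda(n-1)\frac{g'(s)}{s}+\phi(g'(s)),\qquad s\in[r,kr].
\]

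Writing $h=g'$, I would let $h$ solve
\[
h'(s)=-\frac{1}{\lambda}\Big[\Lambda(n-1)\frac{h(s)}{s}+\phi(h(s))+\delta\Big],\qquad h(r)=\nu_0,
\]
for some fixed $\delta>0$ and a large initial value $\nu_0\ge 1$ to be chosen, and set $g(s):=\int_r^s h(\tau)\,d\tau$. Then $g(r)=0$, $g$ is concave and increasing, and the scalar inequality holds with margin $\delta/\lambda$. It remains to arrange that $h>0$ on $[r,kr]$ (for bilateral gradient bounds) and that $g(kr)\ge Mr$ (for the boundary value at $\partial B(y,kr)$).

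Both of these I would handle by ODE comparison. Since $\phi(t)\ge t$, whenever $h\ge 1$ we have $\Lambda(n-1)h/s+\delta\le c_0(\lambda,\Lambda,n,r^\ast,\delta)\,\phi(h)$, and hence $h'\ge -c_1\phi(h)$ with $c_1=(1+c_0)/\lambda$. After the linear rescaling $u\mapsto c_1 u$, comparison with $f$ solving $f'=-\phi(f)$, $f(0)=\nu_0$, gives $h(r+u)\ge f(c_1 u)$ as long as $f(c_1 u)\ge 1$; by the Osgood assumption \eqref{eq:ass_phi_near} the function $f$ does not collapse in finite time, so once $\nu_0$ is large enough, $f(c_1 u)\ge 1$ throughout the relevant interval. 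This produces the lower gradient bound and yields
\[
g(kr)=\int_0^{(k-1)r}h(r+u)\,du\;\ge\;\frac{1}{c_1}\int_0^{c_1(k-1)r}f(t)\,dt.
\]
Let $\epsilon_0$ denote the threshold provided by \eqref{eq:ass_phi_inf_new}, and fix $k>1$ so that $c_1(k-1)r^\ast\le\epsilon_0$; this pins down $k=k(\lambda,\Lambda,n,\phi,r^\ast)$. Then \eqref{eq:ass_phi_inf_new} shows that the right-hand side tends to $\infty$ as $\nu_0\to\infty$, so one may choose $\nu_0=\nu_0(M,\lambda,\Lambda,n,\phi,r^\ast)$ large enough that $g(kr)\ge Mr$. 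The lemma then holds with $m:=g(kr)/r\ge M$ and $\nu:=\max\{\nu_0,\,1/f(c_1(k-1)r)\}$.

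The main obstacle is coordinating the three requirements simultaneously: the strict scalar inequality, the two-sided gradient bound, and $m\ge M$. The two hypotheses on $\phi$ play complementary roles: \eqref{eq:ass_phi_near} prevents $f$ (and hence $h$) from collapsing on the fixed interval $[0,c_1(k-1)r]$ once $\nu_0$ is large, which delivers the lower gradient bound; while \eqref{eq:ass_phi_inf_new}, which is the new ingredient compared to Lemmas \ref{le:barrier_sub} and \ref{le:barrier_sub2}, is exactly what is needed to push the accumulated displacement $g(kr)$ past an arbitrary prescribed value $Mr$ on the short radial interval forced on us by the threshold $\epsilon_0$.
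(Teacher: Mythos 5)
Your construction is essentially the paper's: a radial profile on a thin annulus $B(y,kr)\setminus\overline{B(y,r)}$ whose gradient is a decreasing solution of an ODE of the form $f'=-(\mathrm{const})\,\phi(f)$ started at a large value $\nu$, with \eqref{eq:ass_phi_inf_new} (after the same rescaling of the time variable) supplying $m\ge M$; the paper simply takes $f'=-C\phi(f)$ with $C=C(\lambda,\Lambda,n,r^\ast)$ large and absorbs the curvature term $\Lambda(n-1)g'/s$ using $\phi(t)\ge t$, rather than folding it into the ODE and comparing afterwards, which is an equivalent bookkeeping choice. One small remark: keeping $f\ge 1$ on the fixed short interval is not really a consequence of the Osgood condition \eqref{eq:ass_phi_near} (which is not listed among this lemma's hypotheses and only controls $f$ near $0$); it follows from the elementary observation that the time for $f$ to decay from $\nu_0$ to $1$ equals $\int_1^{\nu_0}dt/\phi(t)\ge\int_1^2 dt/\phi(t)>0$ for $\nu_0\ge 2$, so it suffices to take $k-1$ small relative to this fixed quantity.
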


%%%%%%%%%%%%%%%%%%%%%%%%%%%%%%%%%%%%%%%%%%%%
%%%%%%%%%%%%%%%%%%%%%%%%%%%%%%%%%%%%%%%%%%%%
%%%%%%%%%%%%%%%%%%%%%%%%%%%%%%%%%%%%%%%%%%%%

\noindent%\begin{proof}
{\bf Proof of Lemma \ref{le:barrier_sub}.}
Let $g(t)$ be a solution to $g'(t) =  C \phi(g(t))$ with initial condition $g(0) = \mu$,
i.e.,
\begin{align}\label{eq:def_of_g}
t = \int_{\mu}^{g(t)} \frac{ds}{C \phi(s)}, \quad \text{whenever} \quad t \in [0,1],
\end{align}
where $\mu$ and $C$ are constants to be chosen later, see Figure \ref{fig:functions}.
By assumption \eqref{eq:ass_phi_near}
the increasing solution $g : [0,1] \rightarrow [\mu, \infty)$ is well defined through the implicit function theorem whenever $\mu = \mu(C,\phi)$ is small enough.
Define
\begin{align}\label{def:smallm}
m(\mu, C,\phi) : =  \int_{0}^{1} g(t) dt %B(y,r) \setminus B(y,r/2).
\end{align}
and
\begin{align*}%\label{def:first-sub}
\check u(x) =  r \left (\int_{0}^{2 - |x - y|/r} g(t) dt - m\right), \quad \text{whenever}\quad x \in A. %B(y,r) \setminus B(y,r/2).
\end{align*}

By construction $\check{u}(x)< 0$ whenever $x\in A$, $\check u = 0 $ on $\partial B(y, r)$, and $\check{u} =-mr$ on $\partial B(y, 2r)$ for $m$ as defined in \eqref{def:smallm}.
By assumption \eqref{eq:ass_phi_near} we also have that $\mu \rightarrow 0$ implies $g(t) \rightarrow 0$, for all $t \in [0,1]$,
and so $m,\check u \rightarrow 0$ as $\mu \to 0$.
Therefore, by decreasing $\mu$ if necessary, we have $m \leq M$. Our choice of $\mu$ depends only on $C, \phi$ and $M$.

To prove that $\check u$ is a strict classical subsolution to the equation \eqref{eq:main}
we first calculate the derivatives of $\check u$.
For notational simplicity we set
$\# := 2 - \frac{|x - y|}{r}$ %\end{align*}
and obtain
\begin{align*}
\frac{\partial \check u}{\partial x_i} =  -  \frac{x_i - y_i}{|x - y|} g\left(\#\right)
\quad \textrm{giving} \quad
\vert D\check u \vert &= g\left(\#\right).
\end{align*}
The construction of $g(t)$ implies
$
g'(t) = C \phi(g(t))
$
and hence
\begin{align*}
\frac{\partial^2 \check u}{\partial x_j \partial x_i}
=  \frac{1}{r}\frac{(x_i - y_i)(x_j - y_j)}{|x - y|^2}
C \phi\left(g\left(\#\right)\right)
- \left( \frac{\delta_{ij}}{|x - y|} - \frac{(x_i - y_i)(x_j - y_j)}{|x - y|^3} \right)
g\left(\#\right),
\end{align*}
implying
\begin{align*}
\text{Tr}(D^2 \check u) = \frac{1}{r} C  \phi\left(g\left(\#\right)\right) - \frac{n - 1}{|x-y|} g\left(\#\right).
\end{align*}
We decompose $D^2\check u = \left(D^2\check u\right)^+ - \left(D^2\check u\right)^-$ so that
(any $X \in \mathbb{S}^n$ can be decomposed as $X = X^+ - X^-$ with $X^+ \geq 0$, $X^- \geq 0$ and $X^+  X^- = 0$)
%\komM{After diagonalizing the symmetric Hessian $P D^2u P^{-1} = D$ we can decompose $D =D^+ + D^-$ where $D^+$ contains the positive entries and $D^-$ the negative entries of the diagonal matrix $D$. If we now set $A= P^{-1}D^+P$ and $B= D^2u -A$ we have that $A+B = D^2u$ where $A$ is positive semi-definite and $B$ negative semi-definite. Moreover, since the sum of eigenvalues of $X$ is equal to the trace of  $X$ we have $Tr(D^2u)=  Tr(A+B)=Tr(A)+Tr(B) = Tr(D^+) + Tr(D^-)$ and we have our decomposition}
%
\begin{align*}
\text{Tr}\left( \left(D^2\check u\right)^+ \right) =  \frac{1}{r} C \phi\left(g\left(\#\right)\right) \quad \textrm{and} \quad \text{Tr}\left( \left(D^2\check u\right)^- \right) =  \frac{n - 1}{|x-y|} g\left(\#\right).
\end{align*}
%
%To do this we first decompose $D^2\check u$ into one positive semi-definite and one negative semi-definite part, $D^2\check u= \left(D^2\check u\right)^+ - \left(D^2\check u\right)^-$.
Utilizing the structure assumption \eqref{eq:ass_drift_sub} gives, since $\check u \leq 0$, that
\begin{align*}%\label{eq:sub_proof}
&F(x,\check u,D\check u,D^2\check u) \leq F(x,0,D\check u,D^2\check u) \leq \phi\left(g\left(\#\right)\right)
+ \Lambda  \frac{n - 1}{|x-y|} g\left(\#\right)
-  \frac{\lambda}{r} C  \phi\left(g\left(\#\right)\right).
\end{align*}
By recalling $t \leq \phi(t)$ and observing that $r \leq |x-y| \leq 2r$ for all $x \in A$ we conclude
\begin{align*}%\label{eq:sub_proof_ny}
&F(x,\check u,D\check u,D^2\check u) \leq \phi\left(g\left(\#\right)\right) \left(1 + \Lambda \frac{n - 1}{r} - C \frac{\lambda}{r} \right)
\end{align*}
and thus by taking $C$ large enough, depending only on $\lambda, \Lambda, n$, and $r^\ast$, we obtain
\begin{align*}%\label{eq:johej_lower}
F(x,\check u,D\check u,D^2\check u) < 0 \quad \text{in} \quad A.
\end{align*}
Consequently, $\check{u}$ is a strict classical subsolution to \eqref{eq:main} in the annulus $A$.
Finally, after decreasing $\mu$ if necessary, we have
$$
\mu \leq \vert D\check u \vert \leq \mu^{-1}
$$
where $\mu$ depends only on $\lambda, \Lambda, n, \phi, r^\ast$, and $M$.
This completes the proof of the strict classical subsolution $\check{u}$.

By defining $\hat{v} = - \check u$ and mimicking the above reasoning, using \eqref{eq:ass_drift_super} in place of
\eqref{eq:ass_drift_sub}, we conclude that $\hat{v}$ is a strict classical supersolution. The proof of Lemma \ref{le:barrier_sub} is complete.
%%
%\begin{align}\label{eq:super_proof}
%&F(x,\hat{v},D\hat{v},D^2\hat{v}) \geq F(x,\hat{v},D\hat{v},0)
%- \Lambda  \frac{n - 1}{|x-y|} g\left(\#\right)
%+ \lambda \frac{1}{r} C  \phi\left(g\left(\#\right)\right).
%\end{align}
%%
%As $\hat{v} \geq 0$ assumptions \eqref{eq:ass_non-decreasing_in_u} and \eqref{eq:ass_drift_super} together give
%%
%$$
%F(x,\hat{v},D\hat{v}, 0) \geq F(x,0,D\hat{v}, 0)  \geq - \phi(g\left(\#\right)),
%$$
%%
%which, combined with \eqref{eq:super_proof}, yield
%\begin{align}
%&F(x,\hat{v},D\hat{v},D^2\hat{v}) \geq -  \phi(g\left(\#\right))   -  \Lambda  \frac{n - 1}{|x-y|} g\left(\#\right)
%+ \frac{\lambda}{r} C  \phi\left(g\left(\#\right)\right).
%\end{align}
%Continuing as above completes the proof of the strict classical supersolution $\hat{v}$ and
%hence the proof of Lemma \ref{le:barrier_sub}.
%%\end{proof}
$\hfill \Box$ \\

%%%%%%%%%%%%%%%%%%%%%%%%%%%%%%%%%%%%%%%%%%%%%
%%%%%%%%%%%%%%%%%%%%%%%%%%%%%%%%%%%%%%%%%%%%%
%%%%%%%%%%%%%%%%%%%%%%%%%%%%%%%%%%%%%%%%%%%%%

%\begin{proof}
\noindent
{\bf Proof of Lemma \ref{le:barrier_sub2}.}
As the proof largely follows that of Lemma \ref{le:barrier_sub} we only lay out the main differences. With $g(t)$ as in \eqref{eq:def_of_g} and $m$ as in \eqref{def:smallm}, define
%
%\begin{align}\label{def:smallm2}
%m(\mu, C,\phi) : =  \int_{0}^{1} g(t) dt%B(y,r) \setminus B(y,r/2).
%\end{align}
%and
\begin{align*}%\label{def:first-sub2}
\doublecheck u(x) =  r \left (\int_{0}^{2 - |x - y|/r} g(t) dt \right) \quad \text{whenever}\; x \in A. %B(y,r) \setminus B(y,r/2).
\end{align*}
%
%similar to Lemma \ref{le:barrier_sub} (but now without the substraction of $mr$ in $\doublecheck u$).
Then $\doublecheck{u}$ satisfies the specified boundary conditions while having the same derivatives as the subsolution candidate $\check u$ in Lemma \ref{le:barrier_sub}.
%By the ellipticity assumption \eqref{eq:ass_ellipt} we get
%%
%\begin{align*}
%F(x,\doublecheck u,D\doublecheck u,D^2\doublecheck u) \leq F(x,\doublecheck u,D\doublecheck u, 0) + \Lambda \text{Tr}\left( - %\left(D^2\doublecheck u\right)^- \right) - \lambda \text{Tr}\left( \left(D^2\doublecheck u\right)^+ \right)
%\end{align*}
%and after
%After taking $m$ small enough so that $\doublecheck u \leq M^\ast$ we can
Applying \eqref{eq:ass_drift_sub_ast} we conclude
\begin{align*}%\label{eq:sub_proof2}
F(x,\doublecheck u,D\doublecheck u,D^2\doublecheck u) &\leq \phi\left(g\left(\#\right)\right)
+ \Lambda  \frac{n - 1}{|x-y|} g\left(\#\right)
-  \frac{\lambda}{r} C  \phi\left(g\left(\#\right)\right) + \gamma( \doublecheck u(x)) \\
&\leq \phi\left(g\left(\#\right)\right) \left(1 + \Lambda  \frac{n - 1}{r} -  \frac{\lambda C}{2 r} \right)
+ \gamma( \doublecheck u(x)) - \frac{\lambda C}{2 r} \phi\left(g\left(\#\right)\right).
\end{align*}
Pick $C = C(\lambda, \Lambda, n, r^\ast, C^\ast)$ so large that the first term on the right hand side of the above inequality is negative
and such that $\lambda C / (2r^\ast) \geq C^\ast$.
It then remains to ensure that %the lower order term $\gamma( \doublecheck u(x))$ can be dominated by the last term, i.e.  %$\lambda C \phi(g(\#)) / (2 r)$.
%Indeed, we must ensure the validity of
%
\begin{align}\label{eq:tjohejlower}
\gamma( \doublecheck u(x)) - C^\ast \phi\left(g\left(\#\right)\right) \leq 0
\end{align}
whenever $\doublecheck u(x)$ has a small gradient.
%for fixed $r$ and any $M$ arbitrary small. As $g$ depends on $C$, an increase in this constant may not help.
%We may, however, decrease $\mu$.
Since $g(t)$ is a strictly increasing function it follows that $\doublecheck u$ satisfies the sub-mean value property and thus
$$
\doublecheck u(x) \leq r |D\doublecheck u(x)| = r g(\#) \to 0 \quad \text{as} \quad \mu \to 0.
$$
%
%As $g(\#) \to 0$ we have $\doublecheck u(x) \to 0$ as $\mu \to 0$.
By assumption $\gamma(s) \leq C^\ast \phi(s)$ whenever $s \leq 1$ and thus \eqref{eq:tjohejlower} holds as long as $r \leq 1$.
By decreasing $\mu$ if necessary, this completes the proof of Lemma \ref{le:barrier_sub2}.
%\end{proof}
$\hfill \Box$ \\

%%%%%%%%%%%%%%%%%%%%%%%%%%%%%%%%%%%%%%%%%%%%%
%%%%%%%%%%%%%%%%%%%%%%%%%%%%%%%%%%%%%%%%%%%%%
%%%%%%%%%%%%%%%%%%%%%%%%%%%%%%%%%%%%%%%%%%%%%

\noindent
{\bf Proof of Lemma \ref{le:barrier_super}.}
%\begin{proof}
%The proof is similar to that of Lemma \ref{le:barrier_sub}. We define a function
%$f : [0,1] \rightarrow \mathbf{R}$ according to
%%
%\begin{align}\label{eq:def_of_f}
%t = \int_{f(t)}^{\nu} \frac{ds}{C \phi(s)}, \quad t \in [0,1],
%\end{align}
%%
%where $\nu$ and $C$ are constants to be chosen later.
%%%
%\begin{align}\label{eq:ass_phi_inf_new}
% \int_{0}^{\infty} \frac{ds}{\phi(s)} = \infty  \quad \text{and} \quad \phi(s) \leq K s^2\;\; \text{for $1 < s$.} \tag{$\phi_C$}
%\end{align}
%
Let $f(t)$ be a solution to $f'(t) = - C \phi(f(t))$ with initial condition $f(0) = \nu$,
where $\nu$ and $C$ are constants to be chosen later, see Figure \ref{fig:functions}.
Existence follows in a neighbourhood of $(0, \nu)$ since $\phi$ is continuous.
For small enough $k = k(C, \phi)$, independent of $\nu$ and $r$, we can define
\begin{align}\label{eq:second-super}
\doublehat v(x) =  r \int_{0}^{|x - y| / r - 1} f(t) dt, \quad \text{whenever}\quad x \in A_k = B(y,kr)\setminus \overline{B(y,r)}.
\end{align}
By construction we obtain $\doublehat v = 0$ on $\partial B(y, r)$ and $\doublehat v = m r$ on $\partial B(y, kr)$ where
$$
m = \int_0^k f(t) dt.
$$
Assumption \eqref{eq:ass_phi_inf_new} implies $m \to \infty$ as $\nu \to \infty$ and therefore we can make
$\doublehat v$ so large that  $m > M$ by increasing $\nu$, depending only on $\phi, k, C$ and $M$.

To prove that $\doublehat v$ is a strict classical supersolution to \eqref{eq:main},
put $\Xi = |x - y| / r - 1$ and differentiate to obtain
\begin{align*}
\text{Tr}(D^2 \doublehat v) = -\frac{1}{r} C  \phi\left(f\left(\Xi\right)\right) +  \frac{n - 1}{|x-y|} f\left(\Xi\right) \quad \textrm{and} \quad
\vert D\doublehat v \vert = f\left(\Xi\right).
\end{align*}
Following a similar decomposition of $D^2 \doublehat v$ as in the proof of Lemma \ref{le:barrier_sub}, using $\doublehat v \geq 0$, \eqref{eq:ass_deg_ellipt} and \eqref{eq:ass_drift_super} yield
\begin{align}\label{eq:super_proof2}
&F(x,\doublehat v,D\doublehat v,D^2\doublehat v) \geq  - \phi\left(f\left(\Xi\right)\right)
- \Lambda \frac{n - 1}{|x-y|} f\left(\Xi\right)
+  \frac{\lambda}{r} C  \phi\left(f\left(\Xi\right)\right).
\end{align}
By recalling $t \leq \phi(t)$ and observing that $r \leq |x-y| \leq kr$ we conclude
\begin{align*}
&F(x,\doublehat v,D\doublehat v,D^2\doublehat v) \geq
\phi\left(f\left(\Xi\right)\right) \left(- 1 -  \Lambda \frac{n - 1}{r} + C \frac{\lambda}{r} \right) > 0
\end{align*}
whenever $C$ is large enough, depending only on $\lambda, \Lambda, n,$ and $r^\ast$.
Thus $\doublehat v$ is a strict classical supersolution to \eqref{eq:main} in the annulus.
Moreover, $\doublehat v$ has bounded gradient since, after increasing $\nu$ if necessary,
$$
\nu^{-1} \leq \vert D\doublehat v \vert \leq \nu
$$
where $\nu = \nu (\lambda, \Lambda, n, \phi, r^\ast, M)$.
$\hfill \Box$ \\
%\end{proof}

%%%%%%%%%%%%%%%%%%%%%%%%%%%%%%%%%%%%%%%%%%%%
%%%%%%%%%%%%%%%%%%%%%%%%%%%%%%%%%%%%%%%%%%%%
%%%%%%%%%%%%%%%%%%%%%%%%%%%%%%%%%%%%%%%%%%%%
%%%%%%%%%%%%%%%%%%%%%%%%%%%%%%%%%%%%%%%%%%%%
%%%%%%%%%%%%%%%%%%%%%%%%%%%%%%%%%%%%%%%%%%%%
%%%%%%%%%%%%%%%%%%%%%%%%%%%%%%%%%%%%%%%%%%%%
%%%%%%%%%%%%%%%%%%%%%%%%%%%%%%%%%%%%%%%%%%%%
%%%%%%%%%%%%%%%%%%%%%%%%%%%%%%%%%%%%%%%%%%%%
%%%%%%%%%%%%%%%%%%%%%%%%%%%%%%%%%%%%%%%%%%%%
%%%%%%%%%%%%%%%%%%%%%%%%%%%%%%%%%%%%%%%%%%%%

\setcounter{theorem}{0}
\setcounter{equation}{0}

\section{Strong maximum principles}
\label{sec:maximum}

We begin by proving the SMaP using a contradiction argument based on comparison with the strict classical supersolution in Lemma \ref{le:barrier_sub}.

\begin{theorem}[Strong Maximum Principle]\label{thm:SMP}
Assume that \eqref{eq:ass_deg_ellipt}, \eqref{eq:ass_drift_super} and \eqref{eq:ass_phi_near} hold.
Let $\Omega \subset \mathbf{R}^n$ be an open connected set and suppose that $u$
is a viscosity subsolution to \eqref{eq:main} in $\Omega$.
%If there is $x_0 \in \Omega$ such that $0 < u(x_0) = \max_{x \in \Omega} u(x)$, then $u$ is constant.
If $u$ attains a positive maximum in $\Omega$, then $u$ is constant.
\end{theorem}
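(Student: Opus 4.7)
The approach is a standard contradiction-plus-barrier argument, adapted to viscosity solutions via the supersolution from Lemma~\ref{le:barrier_sub} and the comparison principle from Lemma~\ref{le:comp-weak}.

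Suppose $u$ is not constant. Set $M := \max_\Omega u > 0$ and $\Omega_M := \{x \in \Omega : u(x) = M\}$, which by USC is a closed, proper, nonempty subset of $\Omega$. Since $\Omega$ is connected, a standard pushing-a-ball argument produces $z \in \Omega \setminus \Omega_M$ with $\rho := d(z, \Omega_M) < d(z, \partial \Omega)$; then $B(z, \rho) \subset \Omega \setminus \Omega_M$, there is $w \in \partial B(z, \rho) \cap \Omega_M$, and $w$ is an interior point of $\Omega$. Upper semicontinuity of $u$ on the compact set $\overline{B(z, \rho/2)}$ yields $\delta := M - \max_{\overline{B(z, \rho/2)}} u > 0$.

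Next, I apply Lemma~\ref{le:barrier_sub} with center $y = z$ and radius $r = \rho/2$, choosing the lemma's free parameter small enough that the resulting $m$ satisfies both $mr < \delta$ and $M - mr \geq 0$. This produces a strict classical supersolution $\hat v$ on $A := B(z,\rho) \setminus \overline{B(z, \rho/2)}$ with $\hat v = 0$ on the inner sphere and $\hat v = mr$ on the outer sphere. Setting $V := \hat v + (M - mr)$ and using that adding a nonnegative constant preserves the strict supersolution inequality (by \eqref{eq:ass_deg_ellipt}), I obtain that $V$ is a strict classical supersolution with $V = M - mr > M - \delta \geq u$ on the inner sphere and $V = M \geq u$ on the outer sphere, the latter with equality precisely at $w$. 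Lemma~\ref{le:comp-weak} now gives $u < V$ strictly in $A$.

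To derive a contradiction from the tangency $u(w) = V(w) = M$ at $w \in \partial A$, I extend $V$ radially to a $C^2$ function $\tilde V$ on a full neighborhood of $w$: the radial profile of $V$ is driven by the ODE $g' = C\phi(g)$, whose solution can be continued past $t = 1$ by continuity of $\phi$, producing a radial $C^2$ extension $\tilde V(x) = \tilde h(|x-z|)$ on a slightly enlarged annulus, on which the pointwise computation of Lemma~\ref{le:barrier_sub} continues to hold, so $\tilde V$ remains a strict classical supersolution at $w$. Since $g > 0$, the profile $\tilde h$ is strictly increasing, hence $\tilde V > M \geq u$ on $\{|x-z| > \rho\}$ near $w$; combined with $u \leq V = \tilde V$ on $\overline A$, this yields $u \leq \tilde V$ throughout a neighborhood of $w$ in $\Omega$, with equality at $w$. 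The viscosity subsolution definition applied to the $C^2$ test function $\tilde V$ at the interior local maximum $w$ then gives $F(w, M, D\tilde V(w), D^2 \tilde V(w)) \leq 0$, directly contradicting $F(w, M, D\tilde V(w), D^2 \tilde V(w)) > 0$.

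The main obstacle is that the tangency point $w$ lies on the boundary of the annulus on which $V$ is naturally defined, not in its interior, so one cannot simply invoke the viscosity definition with $V$ as test function. The key step is the $C^2$ radial extension of $V$ across $\partial B(z, \rho)$, which is possible precisely because the defining ODE for the radial profile is autonomous and $\phi$ is continuous, and whose usefulness depends on the monotonicity of $\tilde h$ to guarantee $\tilde V \geq u$ also on the exterior side where we have no direct control. A minor technical check is that the strict supersolution inequality survives the extension up to the boundary point $w$, which follows from the uniformity of the lower bound on $F$ produced by the pointwise computation in the proof of Lemma~\ref{le:barrier_sub}.
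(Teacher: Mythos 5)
Your proposal is correct, and it uses the same core ingredients as the paper (the barrier $\hat v$ from Lemma~\ref{le:barrier_sub} plus the comparison Lemma~\ref{le:comp-weak} in a contradiction argument), but it resolves the final contradiction by a genuinely different mechanism. In the paper, the ball $B(y_s,s)$ touching the contact set $K$ is sandwiched between the two spheres of the annulus by choosing $r<s<2r$, so the touching point $\bar x$ lies in the \emph{open} annulus $A$; the comparison principle then gives $u(\bar x)<\hat v(\bar x)\le \tilde M=u(\bar x)$ at once, and no test-function argument is needed. You instead place the touching point $w$ on the outer sphere of the annulus, where the comparison principle alone says nothing, and must therefore extend the barrier to a $C^2$ function $\tilde V$ across $\partial B(z,\rho)$ and invoke the viscosity subsolution definition at the interior maximum $w$ of $u-\tilde V$. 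This is the classical Hopf-style route adapted to viscosity solutions; it costs you the extension step (whose validity you correctly reduce to the autonomous ODE $g'=C\phi(g)$ and continuity of $\phi$ --- though note the continuation you actually need is to \emph{negative} arguments of $g$, i.e.\ across the outer sphere where $2-|x-z|/r<0$, not past $t=1$, which would be the inner sphere; backward existence and positivity of $g$ follow just as easily, so this is only a slip of labelling). What the paper's choice of radii buys is precisely the avoidance of this extension and of any appeal to the viscosity test-function machinery; what your route buys is independence from that geometric trick, and it is the natural template if one also wants the Hopf lemma (Theorem~\ref{corr:hopf}) from the same computation. One small bookkeeping point: when you verify that $\tilde V$ remains a \emph{strict} supersolution on the enlarged annulus, you should also confirm $\tilde V\ge 0$ there so that \eqref{eq:ass_deg_ellipt} and \eqref{eq:ass_drift_super} apply as in the lemma; this holds since the radial profile is increasing and equals $M-mr\ge 0$ on the inner sphere.
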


\noindent
\begin{proof}
Assume, by contradiction, that a nonconstant USC subsolution $u$ attains its strictly positive maximum $\tilde M$ in $\Omega$ and let $K = \{x \in \Omega \,|\, u(x) = \tilde M\}$.
By assumption $K \neq \Omega, K\neq \emptyset$ and therefore, by USC, there exist $\bar{x} \in \Omega \cap \partial K$ and $s_\ast > 0$ such that for every $s <s_\ast$ there is $y_s \in \Omega$ such that
$B(y_s,s) \in \Omega \setminus K$, $\bar{x} \in \partial B(y_s,s)$ and $u(x) < \tilde M$ in $B(y_s,s)$.

For any $y_s \in \Omega$ and for arbitrary positive $M$ and $r \in (0, r_*]$, Lemma \ref{le:barrier_sub} yields existence of a classical strict supersolution $\hat{v}$ in the annulus $A = B(y_s,2r)\setminus \overline{B(y_s,r)}$. %,
%satisfying $\hat{v} = mr$ on $\partial B(y_s, 2r)$ and $\hat{v} =0$ on $\partial B(y_s, r)$, where $m \leq{M}$.
Moreover, since $\tilde{M} > 0$ we see from \eqref{eq:remark1} that we can chose $\hat v$ to satisfy
\begin{align*}
\hat{v} = \tilde M - mr   \;\text{on}\;\; \partial B(y_s, r), \quad \mbox{and} \quad  \hat{v} = \tilde M \;\text{on}\;\;\partial B(y_r, 2r),
\end{align*}
with constant $m$ satisfying $\tilde M - mr \geq 0$ for all $r \in (0,r_\ast]$.

Choose $r$ so that $r < s < 2r$ and $B(y_s,2r) \in \Omega$ (after decreasing $s$ if necessary).
It follows that $u \leq \hat{v} =  \tilde M$ on $\partial B(y_s,2r)$.
Using the possibility of decreasing $m$ once again and the fact that $u < \tilde{M}$ on $\partial B(y,r)$,
we can take $m$ so that $u \leq \hat{v}$ on $\partial B(y,r)$ as well.
Lemma \ref{le:comp-weak} therefore implies $u < \hat{v}$ in the annulus $A = B(y_s,2r)\setminus \overline{B(y_s,r)}$.
As the gradient of $\hat{v}$ does not vanish in $A$, $\bar{x} \in A$ and $u(\bar{x}) = \tilde{M}$, we arrive at a contradiction.
\end{proof}\\

%%%%%%%%%%%%%%%%%%%%%%%%%%%%%%%%%%%%%%%%%%%%%
%%%%%%%%%%%%%%%%%%%%%%%%%%%%%%%%%%%%%%%%%%%%%
%%%%%%%%%%%%%%%%%%%%%%%%%%%%%%%%%%%%%%%%%%%%%

Using a similar argument we next prove the following version of Hopf lemma.

%(As expected, an analogous version for viscosity supersolutions also follows by changing assumptions in the obvious way.)
%
%
%\begin{theorem}[Hopf lemma]\label{corr:hopf}
%Assume that \eqref{eq:ass_ellipt}, \eqref{eq:ass_non-decreasing_in_u}, \eqref{eq:ass_drift_sub} and \eqref{eq:ass_phi_near} hold.
%Let $\Omega \subset \mathbf{R}^n$ be a domain,
%$w\in \partial \Omega$,
%and suppose that $\Omega$ satisfies an interior ball condition with $B(y_w, r_w)$ at $w$.
%If $v$ is a viscosity supersolution to \eqref{eq:main} in $\Omega$,
%lower semicontinuous on $\Omega \cup \{w\}$ such that $v(w) < \liminf_{\Omega \ni x \to w} v(x)$ and $v(w) < 0$  %\komM{Strict inequlity, should be relaxed to $\leq$ to have equivalence with alternate formulation}
%then, for any $\gamma \in \mathbf R^n$ such that $ \gamma \cdot (w - y_w) < 0$ we have
%%
%\begin{align*}
%\liminf_{s \to 0} \frac{v(w + s \gamma) - v(w)}{s} > 0.
%\end{align*}
%%
%\end{theorem}
%%

\begin{theorem}[Hopf lemma]\label{corr:hopf}
Assume that \eqref{eq:ass_deg_ellipt}, \eqref{eq:ass_drift_super} and \eqref{eq:ass_phi_near} hold.
Let $\Omega \subset \mathbf{R}^n$ be a domain,
$w\in \partial \Omega$,
and suppose that $\Omega$ satisfies an interior ball condition with $B(y_w, r_w)$ at $w$.
If $u$ is a viscosity subsolution to \eqref{eq:main} in $\Omega$,
upper semicontinuous on $\Omega \cup \{w\}$ so that $u(w) > \limsup_{\Omega \ni x \to w} u(x)$ whenever $x \in \Omega$, and $u(w) > 0$,
then, for any $v\in \mathbf R^n$ such that $ v \cdot (w - y) < 0$ we have
\begin{align*}
\limsup_{s \to 0} \frac{u(w + s v) - u(w)}{s} < 0.
\end{align*}
\end{theorem}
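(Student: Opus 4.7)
The plan is to compare $u$ from above with a strict classical supersolution furnished by Lemma~\ref{le:barrier_sub} in an annulus whose outer sphere touches $\partial\Omega$ precisely at $w$.

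I would first use the hypothesis $u(w) > \limsup_{\Omega \ni x \to w} u(x)$ together with upper semicontinuity to pick $\delta > 0$ and $M_0 < u(w)$ such that $u(x) \leq M_0$ for every $x \in \Omega \cap B(w, \delta) \setminus \{w\}$. Writing $\nu_0 := (w - y_w)/r_w$, I observe that for any $r' \in (0, r_w]$ the ball $B(y', r')$ with $y' := w - r'\nu_0$ sits inside $B(y_w, r_w) \subset \Omega$, and a direct calculation shows $\partial B(y', r') \cap \partial B(y_w, r_w) = \{w\}$, so $\partial B(y', r') \setminus \{w\} \subset \Omega$. Choose $r'$ so small that $\overline{B(y', r')} \subset B(w, \delta)$; then $u \leq M_0$ on $\partial B(y', r') \setminus \{w\}$ and on $\overline{B(y', r'/2)}$.

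Next, fix $M > 0$ with $u(w) - M\, r'/2 \geq \max\{0, M_0\}$, apply Lemma~\ref{le:barrier_sub} with center $y'$, inner radius $r'/2$, bound $M$, and invoke the shift in~\eqref{eq:remark1} with $\tilde M = u(w) > 0$. This yields a strict classical supersolution $\hat v$ on $A := B(y', r') \setminus \overline{B(y', r'/2)}$ satisfying
\[
\hat v = u(w) - m\, r'/2 \text{ on } \partial B(y', r'/2), \qquad \hat v = u(w) \text{ on } \partial B(y', r'), \qquad |D\hat v| \geq \mu > 0 \text{ in } A,
\]
and the explicit formula in the proof of Lemma~\ref{le:barrier_sub} reveals that $D\hat v$ is radial outward from $y'$, with $D\hat v(w) = \mu\, \nu_0$. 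By our choice of $M$ and $r'$, $u \leq \hat v$ on both boundary components of $A$, so Lemma~\ref{le:comp-weak} gives $u < \hat v$ throughout $A$.

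Finally, for $v \in \mathbf R^n$ with $v \cdot (w - y_w) < 0$ one has $v \cdot \nu_0 < 0$, hence $w + sv \in A$ for all sufficiently small $s > 0$. Using $u(w) = \hat v(w)$, $u \leq \hat v$ in $A$, and that $\hat v$ is $C^2$ near $w$,
\[
\frac{u(w + sv) - u(w)}{s} \;\leq\; \frac{\hat v(w + sv) - \hat v(w)}{s} \;\longrightarrow\; D\hat v(w) \cdot v \;=\; \mu\, (\nu_0 \cdot v) \;<\; 0 \qquad \text{as } s \to 0^+,
\]
and taking the $\limsup$ delivers the claim. The main subtlety I anticipate lies in securing $u \leq \hat v$ on the \emph{outer} sphere $\partial B(y', r')$: this is what forces the preliminary shrinking of the touching ball, so that the entire outer sphere (away from $w$) sits inside the neighbourhood of $w$ in which the strict bound $u < u(w)$ provided by the hypothesis is in force; shrinking $M$ (and hence $m$) then handles the inner sphere.
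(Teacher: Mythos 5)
Your proof is correct and takes essentially the same route as the paper's: an interior tangent annulus at $w$, the shifted strict classical supersolution $\hat v$ obtained from Lemma~\ref{le:barrier_sub} via \eqref{eq:remark1}, comparison through Lemma~\ref{le:comp-weak}, and the non-vanishing outward radial gradient $D\hat v(w)=\mu\,\nu_0$ to conclude. If anything, your write-up is more careful than the paper's own (which contains notational slips such as ``$\tilde M = v(w)<0$''), in particular in how you secure $u\le \hat v$ on both spheres of the annulus by first shrinking the touching ball into the neighbourhood where $u\le M_0<u(w)$.
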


\noindent
\begin{proof}
Choose $r < r_w/2$ and let $y_r= w + 2r \frac{(y_w-w)}{|(y_w-w)|}$.
Put $\tilde{M} = v(w)<0$.
Following the argument in the proof of Theorem \ref{thm:SMP} yields a strict classical supersolution $\hat u$
in the annulus $A = B(y_r,2r)\setminus \overline{B(y_r,r)}$ satisfying
$\hat{v} = \tilde M$ on $\partial B(y_r, 2r)$ and $\hat{v} = \tilde M - mr$ on $\partial B(y_r, r)$.
Choosing $r$ small enough, we can also ensure that $\hat u > u$ on $B(y_r,r)$ and thus,
from the weak comparison principle we have that $\hat{u} > u$ in $A$.
Hence, since $\hat{u}$ has bounded gradient
$$
\limsup_{s \to 0} \frac{u(w + s \gamma) - u(w)}{s} \leq D\check{u} \cdot \gamma < 0.
$$
\end{proof}\\

%A simple counterexample from Julin \cite{J13} shows that assumption \eqref{eq:ass_phi_near} is necessary for Theorems \ref{corr:hopf} and \ref{thm:SMP} to hold.

\begin{remark}\label{remark:counter1}
Assumption \eqref{eq:ass_phi_near} is necessary for Theorems \ref{corr:hopf} and \ref{thm:SMP} to hold.
\end{remark}
Indeed, assume that \eqref{eq:ass_phi_near} does not hold and
follow Julin \cite{J13} by defining $h : (-1, 1) \to [0, \infty)$ so that $h(x) = 0$ for $(-1, 0)$ and
\begin{align*}
x = \int_{0}^{h(x)} \frac{dt}{\phi(t)} \quad \text{for} \; x \in [0,1).
\end{align*}
Then $h' = \phi(v)$ and the function
\begin{align*}
H(x) = 1 - \int_{0}^{x} h(s) ds
\end{align*}
satisfies $H'' + \phi(|H'|) = 0$ classically on $(-1,1)$ and violates both the Hopf lemma and the SMaP
(SMiP by $-H$).
Extending $H$ to $(-3,1)$ evenly in the line $x = -1$ produces a classical solution contradicting the maximum principle, the comparison principle and uniqueness as well, since constants solves the equation, see Figure \ref{fig:functions}.

We end this section by noting that by using the auxiliary subsolution from Lemma \ref{le:barrier_sub} in place of the supersolution in similar arguments as in the above proofs we conclude the following versions of Theorem \ref{corr:hopf} and Theorem \ref{thm:SMP} for viscosity supersolutions.

\begin{remark}\label{re:remark-minimum principles}
Under the assumptions in Theorem \ref{corr:hopf} but with \eqref{eq:ass_drift_super} replaced by \eqref{eq:ass_drift_sub},
if $v$ is a viscosity supersolution to \eqref{eq:main} in $\Omega$, lower semicontinuous on $\Omega \cup \{w\}$ such that $v(w) < \liminf_{\Omega \ni x \to w} v(x)$ and $v(w) < 0$, %\komM{Strict inequlity, should be relaxed to $\leq$ to have equivalence with alternate formulation}
then, for any $\gamma \in \mathbf R^n$ such that $ \gamma \cdot (w - y_w) < 0$ we have
\begin{align*}
\liminf_{s \to 0} \frac{v(w + s \gamma) - v(w)}{s} > 0.
\end{align*}
Moreover, under the assumptions in Theorem \ref{thm:SMP} but with \eqref{eq:ass_drift_super} replaced by \eqref{eq:ass_drift_sub}
a viscosity supersolution to \eqref{eq:main} may not attain a negative minimum in $\Omega$ unless it is constant.
\end{remark}

%%%%%%%%%%%%%%%%%%%%%%%%%%%%%%%%%%%%%%%%%%%%
%%%%%%%%%%%%%%%%%%%%%%%%%%%%%%%%%%%%%%%%%%%%
%%%%%%%%%%%%%%%%%%%%%%%%%%%%%%%%%%%%%%%%%%%%
%%%%%%%%%%%%%%%%%%%%%%%%%%%%%%%%%%%%%%%%%%%%
%%%%%%%%%%%%%%%%%%%%%%%%%%%%%%%%%%%%%%%%%%%%
%%%%%%%%%%%%%%%%%%%%%%%%%%%%%%%%%%%%%%%%%%%%
%%%%%%%%%%%%%%%%%%%%%%%%%%%%%%%%%%%%%%%%%%%%
%%%%%%%%%%%%%%%%%%%%%%%%%%%%%%%%%%%%%%%%%%%%
%%%%%%%%%%%%%%%%%%%%%%%%%%%%%%%%%%%%%%%%%%%%
%%%%%%%%%%%%%%%%%%%%%%%%%%%%%%%%%%%%%%%%%%%%

\setcounter{theorem}{0}
\setcounter{equation}{0}

\section{Boundary growth estimates}
\label{sec:boundary}

%\subsection{Lower growth estimates}
We begin by proving that a positive supersolution vanishing on a boundary satisfying an interior ball condition may not vanish faster than the distance to the boundary.

\begin{theorem}[Lower estimate]\label{le:lower}
Assume that \eqref{eq:ass_deg_ellipt}, \eqref{eq:ass_drift_sub_ast} and \eqref{eq:ass_phi_near} hold.
Let $\Omega \subset \mathbf{R}^n$ be a domain satisfying the interior ball
condition with radius $r_i$ and take $w \in \partial \Omega$ and $r$ s.t. $0<2r < r_i$.
Assume that $v$ is a positive viscosity supersolution to \eqref{eq:main} in
$\Omega \cap B(w, 6r)$ satisfying $v = 0$ on $\partial \Omega \cap B(w,6r)$.
Then there exists a constant $c = c(\lambda, \Lambda, n, \phi, r_i, r^{-1}{\inf_{\Gamma_{w,r}} v}, C^\ast)$ such that
\begin{align*}
c\, v(x) \geq  d(x, \partial\Omega) \qquad \text{whenever} \quad x \in \Omega \cap B(w, r),
\end{align*}
where $\Gamma_{w,r} = \{x \in \Omega | r < d(x,\partial \Omega) < 3r \} \cap B(w,6r)$.
\end{theorem}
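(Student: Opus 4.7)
My plan is to prove Theorem \ref{le:lower} by a direct barrier argument using the positive strict classical subsolution from Lemma \ref{le:barrier_sub2}. Fix $x_0 \in \Omega \cap B(w,r)$ and set $d_0 := d(x_0, \partial\Omega)$; since $w \in \partial\Omega$ we have $d_0 \leq |x_0 - w| < r$, and we may assume $d_0 > 0$ (otherwise the claim is trivial). Let $w_0 \in \partial\Omega$ realise the distance to $x_0$, and let $n$ be the inward unit normal at $w_0$ coming from the interior ball condition, so the tangent interior ball $B(\eta, r_i) \subset \Omega$ has $\eta = w_0 + r_i n$; because $w_0$ is a nearest boundary point, $x_0 = w_0 + d_0 n$. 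Define $\tilde\eta := w_0 + 2r n$. Since $2r < r_i$, we have $B(\tilde\eta, 2r) \subset \overline{B(\eta, r_i)} \subset \overline\Omega$, tangent to $\partial\Omega$ only at $w_0$, and a short triangle-inequality check yields $|\tilde\eta - w| < 4r$, so $B(\tilde\eta, 2r) \subset B(w, 6r)$. Hence the annulus $A := B(\tilde\eta, 2r) \setminus \overline{B(\tilde\eta, r)}$ lies in $\Omega \cap B(w, 6r)$, and $|x_0 - \tilde\eta| = 2r - d_0 \in (r, 2r)$ places $x_0 \in A$.

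Next I apply Lemma \ref{le:barrier_sub2} with centre $\tilde\eta$, inner radius $r$, and $M := r^{-1} \inf_{\Gamma_{w,r}} v$. This produces a classical strict subsolution $\doublecheck u$ in $A$ with $\doublecheck u = mr$ on $\partial B(\tilde\eta, r)$, $\doublecheck u = 0$ on $\partial B(\tilde\eta, 2r)$, $|D \doublecheck u| \geq \mu$, and $m \leq M$, where $\mu$ depends only on $\lambda, \Lambda, n, \phi, r_i, C^\ast$ and $M$. On $\partial B(\tilde\eta, 2r)$ we have $v \geq 0 = \doublecheck u$, using $v > 0$ in the interior and $v(w_0) = 0$. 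For each $z \in \partial B(\tilde\eta, r)$, the estimates $|z - w_0| \in [r, 3r]$, $|z - \eta| \leq r_i - r$ (so $d(z, \partial\Omega) \geq r$), and $|z-w| \leq 5r$ place $z$ in $\overline{\Gamma_{w,r}}$, whence $v(z) \geq \inf_{\Gamma_{w,r}} v = Mr \geq mr = \doublecheck u(z)$. Lemma \ref{le:comp-weak} then yields $v > \doublecheck u$ throughout $A$. Using the explicit formula $\doublecheck u(x) = r \int_0^{2 - |x - \tilde\eta|/r} g(t)\, dt$ from the proof of Lemma \ref{le:barrier_sub2} and $g \geq g(0) = \mu$ on $[0,1]$, evaluating at $x_0$ gives $\doublecheck u(x_0) = r \int_0^{d_0/r} g(t)\, dt \geq \mu\, d_0$, hence $v(x_0) \geq \mu\, d(x_0, \partial\Omega)$. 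The theorem follows with $c = \mu^{-1}$.

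The delicate step is justifying $v \geq \inf_{\Gamma_{w,r}} v$ on the inner sphere, which only lies in the closure $\overline{\Gamma_{w,r}}$: the point $w_0 + r n$ on this sphere has $d(\cdot, \partial\Omega) = r$ and thus sits on $\partial \Gamma_{w,r}$, where the lower semicontinuity of $v$ does not immediately transfer the infimum. I would resolve this either by slightly perturbing $\tilde\eta$ along $n$ by a small $\delta > 0$ so that $\partial B(\tilde\eta, r) \subset \Gamma_{w,r}$ strictly, redoing the comparison on the perturbed annulus, and letting $\delta \to 0$ (the case $d_0$ close to $r$ being covered by continuity of the distance function), or by working with a slightly enlarged comparison set inside which $v$ is controlled from below by a fixed positive multiple of $\inf_{\Gamma_{w,r}} v$. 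Once this point is secured, everything else is a mechanical combination of Lemmas \ref{le:barrier_sub2} and \ref{le:comp-weak}.
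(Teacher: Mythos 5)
Your proof is correct and follows essentially the same route as the paper's: centre the annulus $B(\tilde\eta,2r)\setminus\overline{B(\tilde\eta,r)}$ on the normal line through the nearest boundary point, invoke Lemma \ref{le:barrier_sub2} with $M=r^{-1}\inf_{\Gamma_{w,r}}v$, compare via Lemma \ref{le:comp-weak}, and conclude from the gradient lower bound. Your closing observation that the inner sphere only lies in $\overline{\Gamma_{w,r}}$ (so that lower semicontinuity alone does not transfer the infimum) is a genuine subtlety that the paper's own proof passes over silently, and your perturbation fix is a valid way to close it.
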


%%%%%%%%%%%%%%%%%%%%%%%%%%%%%%%%%%%%%%%%%%%%
%%%%%%%%%%%%%%%%%%%%%%%%%%%%%%%%%%%%%%%%%%%%
%%%%%%%%%%%%%%%%%%%%%%%%%%%%%%%%%%%%%%%%%%%%

\noindent
\begin{proof}
We follow the lines of Aikawa--Kilpel\"ainen--Shanmugalingam--Zhong \cite[Lemma 3.1]{AKSZ07} and
take $x \in \Omega \cap B(w, r)$ and let $\eta \in \partial \Omega$ be such that $d(x, \partial\Omega) = |x - \eta|$.
By the interior ball condition at $\eta$ we can find a point
$\eta^i$ such that $B(\eta^i, r_i) \subset \Omega$ and $\eta \in \partial B(\eta^i, r_i)$.
Now, take the point $\eta^i_{2r}$ which is such that $\eta= \eta^i_{2r} +2r \frac{\eta-\eta_i}{|(\eta-\eta_i)|}$, i.e., $\eta^i_{2r} \in \Omega$ lies on a straight line $\gamma$ between $\eta$ and $\eta^i$ on a distance $2r$ from the boundary $\partial \Omega$. Then $B(\eta^i_r, 2 r)\subset \Omega$ so $u$ is a positive viscosity supersolution in $B(\eta^i_{2r}, 2r)$.  By construction $|x-\eta| < r$ so $|\eta^i_{2r}-x| > r$ and by the interior ball condition we get that $x$ lies on the line $\gamma$ and thus $x \in A =B(\eta^i_{2r},2r)\setminus \overline{	B(\eta^i_{2r}, r)}$.

Next we note that $B(\eta^i_r, r) \subset \Gamma_{w,r}$ and that, for $m > 0$ small enough, we have
\begin{align*}%\label{eq:note1}
v(x) \geq \inf_{\Gamma_{w,r}} v \geq m r > 0 \qquad \hbox{whenever} \quad x \in \Gamma_{w,r}.
\end{align*}
Now apply Lemma \ref{le:barrier_sub2}	with $r^\ast=r_i/2$ and $M=r^{-1}\inf_{\Gamma_{w,r}} v$  to ensure existence of a classical strict subsolution $\doublecheck{u}$ in the annulus $A \subset \Omega \cap B(w,6r)$. Moreover, this subsolution can be taken to satisfy $\doublecheck{u} =0$ on $\partial B(\eta_r^i, 2r)$ and $\doublecheck{u} =mr \leq v(x)$ on $\partial B(\eta_r^i, r)$, for some $m = m(\lambda, \Lambda, n, \phi,r_i) \in (0, {r^{-1}\inf_{\Gamma_{w,r}} u})$. Since $\doublecheck u \leq v$ on the boundaries of the annulus $A= B(\eta_r^i, 2r) \setminus \overline{B(\eta_r^i, r)}$ and $\doublecheck{u}$ is a classical strict subsolution and
we get from the weak comparison principle in Lemma \ref{le:comp-weak} that $\doublecheck u(x) \leq v(x)$ for all $x\in A$.

Since $x \in A$ the result will follow after ensuring that $\doublecheck u$
does not vanish faster than $d(x, \partial\Omega)$ as $x \to \partial\Omega$.
However, this is a consequence of Lemma \ref{le:barrier_sub2} which gives the existence of a constant
$c = c(\lambda, \Lambda, n, \phi, r^{-1}\inf_{\Gamma_{w,r}} v, C^\ast)$, independent of $x$, such that
$c^{-1} \leq |D\doublecheck{u}| \leq c$. The proof is thus complete.
\end{proof}\\

%%%%%%%%%%%%%%%%%%%%%%%%%%%%%%%%%%%%%%%%%%%%
%%%%%%%%%%%%%%%%%%%%%%%%%%%%%%%%%%%%%%%%%%%%
%%%%%%%%%%%%%%%%%%%%%%%%%%%%%%%%%%%%%%%%%%%%

%\begin{remark}[Alternative formulation of Proposition \ref{corr:hopf}]
With the positive subsolution barriers constructed in Lemma \ref{le:barrier_sub2} we may repeat the proof of the Hopf lemma for viscosity supersolutions in Remark \ref{re:remark-minimum principles} to relax the strict inequality $v(\omega) < 0$ at the boundary to $v(\omega)\leq 0$.
This slightly stronger result and the SMiP imply the following generalization of %is sometimes presented in the following alternate formulation,
Corollary 5.2 in Adamowicz--Lundstr\"om \cite{AL16} and Proposition 6.1 in Aikawa--Kilpel\"ainen--Shanmugalingam--Zhong \cite{AKSZ07}:

\begin{corollary}
Suppose that \eqref{eq:ass_deg_ellipt}, \eqref{eq:ass_drift_sub_ast}, \eqref{eq:ass_phi_near} hold,
and that $v \geq 0$ is a positive viscosity supersolution to \eqref{eq:main} in $\Omega$.
If there exists a point $w \in \partial \Omega$ satisfying an interior ball condition such that
\begin{align}\label{eq:corr-version}
\liminf_{\Omega \ni y \to w} \frac{v(y)}{d(y,\partial \Omega)} = 0,
\end{align}
then $v\equiv 0$ in $\Omega$.
%\begin{align}\label{eq:corr-version}
%&\mbox{Let $v \geq 0$ be a viscosity supersolution in a domain $\Omega$.
%If there exists a point $w \in \partial \Omega$ such that} \notag \\
%&\mbox{ $\liminf_{\Omega \ni y \to w} \frac{v(y)}{d(y,\partial \Omega)} = 0$,
%then $u\equiv 0$ in $\Omega$.}
%\end{align}
\end{corollary}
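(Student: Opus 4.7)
The plan is to argue by contradiction: assume $v \not\equiv 0$ in $\Omega$. The proof splits into two steps, both hinging on comparison with the positive classical strict subsolution $\doublecheck u$ from Lemma \ref{le:barrier_sub2}. First I would upgrade the SMiP of Remark \ref{re:remark-minimum principles} to show $v > 0$ everywhere in $\Omega$ (the SMiP as stated only handles strictly negative minima). Then, using the interior ball at $w$, I would build a one-sided barrier producing $v(x) \geq \mu\, d(x, \partial\Omega)$ along the inner normal at $w$, violating the liminf hypothesis.

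For the strict positivity, set $K := \{v = 0\}$, which is relatively closed in $\Omega$ by lower semicontinuity. Assume for contradiction $\emptyset \neq K \subsetneq \Omega$, and pick $y_0 \in \{v>0\}$ near $\partial K$ with $s_0 := d(y_0, K) > 0$ so small that $\overline{B(y_0, 2s_0)} \subset \Omega$. Then $B(y_0, s_0) \subset \{v>0\}$ and there exists $x^\ast \in \partial B(y_0, s_0) \cap K$. Choose $r$ with $r < s_0 < 2r$ (e.g.\ $r = 2s_0/3$) and apply Lemma \ref{le:barrier_sub2} on the annulus $A = B(y_0, 2r) \setminus \overline{B(y_0, r)}$ to get $\doublecheck u$ satisfying $\doublecheck u = mr$ on the inner sphere, $\doublecheck u = 0$ on the outer sphere, and $|\nabla \doublecheck u| \geq \mu > 0$ in $A$. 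By LSC, $\epsilon_0 := \inf_{\partial B(y_0, r)} v > 0$; choose $m$ small enough that $mr \leq \epsilon_0$, so $\doublecheck u \leq v$ on $\partial A$. Lemma \ref{le:comp-weak} then yields $\doublecheck u < v$ in $A$, but $x^\ast \in A$ lies strictly between the two spheres, hence $\doublecheck u(x^\ast) > 0 = v(x^\ast)$, a contradiction.

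Now use the interior ball $B(y_w, r_w) \subset \Omega$ tangent to $\partial \Omega$ at $w$. For $r$ small set $y_r^\sharp := w + 2r(y_w - w)/r_w$, so that $B(y_r^\sharp, 2r) \subset \overline{B(y_w, r_w)}$ is tangent to $\partial \Omega$ only at $w$. Apply Lemma \ref{le:barrier_sub2} on $A_r := B(y_r^\sharp, 2r) \setminus \overline{B(y_r^\sharp, r)}$: LSC together with $v>0$ from the first step yields $\inf_{\partial B(y_r^\sharp, r)} v > 0$, so a sufficiently small $m$ gives $\doublecheck u \leq v$ on the inner sphere; on the outer sphere, $\doublecheck u = 0$ and $v \geq 0$, with $v(w) \geq 0$ obtained by LSC extension at the single contact point $w \in \partial \Omega$. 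Lemma \ref{le:comp-weak} then gives $\doublecheck u \leq v$ in $A_r$. Since $|\nabla \doublecheck u| \geq \mu$ in $A_r$ and $\doublecheck u = 0$ on the outer sphere, one has $\doublecheck u(x) \geq \mu |x - w|$ for $x$ on the segment from $y_r^\sharp$ to $w$ inside $A_r$. Combined with $d(x, \partial \Omega) \leq |x - w|$ this yields $v(x)/d(x, \partial\Omega) \geq \mu > 0$ along a sequence $x \to w$, contradicting $\liminf_{\Omega \ni y \to w} v(y)/d(y, \partial\Omega) = 0$.

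The main obstacle is the first step. The SMiP of Remark \ref{re:remark-minimum principles} is not directly applicable since it requires a strictly negative minimum, and the nonhomogeneity of \eqref{eq:main} prevents a vertical translation that would reduce the case $\min v = 0$ to $\min v < 0$. The point that makes the argument succeed is precisely the \emph{positivity} of the barrier in Lemma \ref{le:barrier_sub2}, whose existence relies on the refined structural assumption \eqref{eq:ass_drift_sub_ast}. Once $v > 0$ is established, the second step is a localised version of the barrier argument already used for Theorem \ref{le:lower}, with the only adjustment that, since we do not assume $v = 0$ on $\partial \Omega$, we extend $v$ to $w$ by its LSC envelope in order to validate the comparison on the outer sphere.
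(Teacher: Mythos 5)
Your strategy is the same as the paper's: the paper's (very terse) proof combines the strong minimum principle and the Hopf lemma of Remark \ref{re:remark-minimum principles}, both relaxed to zero minima by means of the positive barrier of Lemma \ref{le:barrier_sub2}, and your Step 1 is precisely the worked-out version of that relaxation of the SMiP (correct and complete, including the choice $r<s_0<2r$ and the touching point $x^\ast$ strictly inside the annulus), while your Step 2 is the worked-out relaxed Hopf lemma at $w$, including the LSC extension of $v$ to the single contact point needed to apply Lemma \ref{le:comp-weak}.

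The gap is in the final inference of Step 2. Establishing $v(x)/d(x,\partial\Omega)\geq\mu$ for $x$ on the radial segment from $y_r^\sharp$ to $w$ gives a positive lower bound along \emph{one} sequence tending to $w$; this does not contradict $\liminf_{\Omega\ni y\to w} v(y)/d(y,\partial\Omega)=0$, which only asserts that \emph{some} sequence realizes the value $0$. To contradict the hypothesis you must bound the ratio from below for \emph{every} $y\in\Omega$ near $w$. For general $y\in A_r$ your barrier yields only $v(y)\geq\mu\,d\bigl(y,\partial B(y_r^\sharp,2r)\bigr)$, and since $B(y_r^\sharp,2r)\subset\Omega$ one has $d\bigl(y,\partial B(y_r^\sharp,2r)\bigr)\leq d(y,\partial\Omega)$ --- the inequality points the wrong way, so tangential approaches to $w$ along $\partial B(y_r^\sharp,2r)$, and approaches from $\Omega\setminus B(y_w,r_w)$, are not controlled at all. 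The repair used in Theorem \ref{le:lower} is to re-center the annulus at the boundary point nearest to each given $y$, which requires the interior ball condition in a whole neighbourhood of $w$ rather than at the single point $w$; short of that, one must argue separately that a minimizing sequence for the $\liminf$ can be taken along the inner normal. To be fair, the paper's own one-line argument invokes the Hopf lemma, which likewise only controls radial difference quotients, so your proposal faithfully reproduces the paper's reasoning --- but as written the last step does not close.
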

\begin{proof}
 As the above limit is zero and $v \geq 0$,
the viscosity supersolution $v$ must vanish at a minimum at the boundary point $w \in \partial \Omega$.
If this minimum is strict relative to the interior of $\Omega$,
then the Hopf lemma in Remark \ref{re:remark-minimum principles} violates \eqref{eq:corr-version}.
Therefore, we must have $v(x) = 0$ in some small interior ball at $w$ and $v \equiv 0$ then follows by Remark \ref{re:remark-minimum principles}.
\end{proof}\\

%\subsection{Upper growth estimates}

We next prove an upper estimate of viscosity subsolutions vanishing on a portion of a domain satisfying an exterior ball condition.

\begin{theorem}[Upper estimate]\label{le:upper}
Assume that \eqref{eq:ass_deg_ellipt}, \eqref{eq:ass_drift_super} and \eqref{eq:ass_phi_inf_new} hold.
Let $\Omega \subset \mathbf{R}^n$ be a domain satisfying the exterior ball
condition with radius $r_e$ and take $w \in \partial \Omega$ and r s.t. $0 < 2r < r_e$.
Assume that $u$ is a strictly positive viscosity subsolution to \eqref{eq:main} in
$\Omega \cap B(w, 6r)$ satisfying $u = 0$ on $\partial \Omega \cap B(w,6r)$.
Then there exists a constant $c = c(\lambda, \Lambda, n, \phi, r_e, r^{-1}\sup_{B(w,6 r)\cap \Omega} u)$ such that
\begin{align*}
 u(x) \leq c \, d(x, \partial\Omega) \qquad \text{for} \quad x \in \Omega \cap B(w, r).
\end{align*}
\end{theorem}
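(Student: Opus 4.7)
The proof mirrors that of the lower estimate (Theorem~\ref{le:lower}) with the roles of sub- and supersolution swapped: we construct a strict classical supersolution on an annulus whose inner ball is placed \emph{outside} $\Omega$ via the exterior ball condition, and we compare with $u$ using Lemma~\ref{le:comp-weak}. The assumption \eqref{eq:ass_phi_inf_new} enters through Lemma~\ref{le:barrier_super}, which provides the needed barrier.

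Fix $x \in \Omega \cap B(w,r)$, set $\delta = d(x,\partial\Omega)$, and let $\eta \in \partial\Omega$ attain $|x-\eta|=\delta$, so $|\eta-w|<2r$. By the exterior ball condition at $\eta$ with radius $r_e$, pick a ball $B(\eta^e,r_e)\subset \mathbb{R}^n\setminus\Omega$ with $\eta\in\partial B(\eta^e,r_e)$. For any $\rho\in(0,r_e]$, the shifted center $\eta^e_\rho := \eta + (\rho/r_e)(\eta^e-\eta)$ yields a ball $B(\eta^e_\rho,\rho)\subset B(\eta^e,r_e)$ tangent to $\partial\Omega$ at $\eta$; a short triangle inequality argument shows that $\overline{B(\eta^e_\rho,\rho)}\cap\overline{\Omega}=\{\eta\}$. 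Because $x-\eta$ is parallel to $\eta-\eta^e_\rho$ (the nearest-point projection lies along the outward normal), one has $|x-\eta^e_\rho|=\rho+\delta$.

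Set $S=\sup_{B(w,6r)\cap\Omega}u$. Apply Lemma~\ref{le:barrier_super} with $r^\ast=r_e$, inner radius $\rho$, center $y=\eta^e_\rho$, and $M:=S/\rho$ to obtain $k>1$ depending only on $\lambda,\Lambda,n,\phi,r_e$, a constant $\nu=\nu(\lambda,\Lambda,n,\phi,r_e,M)$, and a strict classical supersolution $\doublehat v$ on $A_k=B(\eta^e_\rho,k\rho)\setminus\overline{B(\eta^e_\rho,\rho)}$ with $\doublehat v=0$ on the inner sphere, $\doublehat v=m\rho\geq M\rho=S$ on the outer sphere, and $|D\doublehat v|\leq\nu$ throughout $A_k$. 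Choose $\rho=4r/(k+1)$; then $|\eta^e_\rho-w|\leq\rho+2r$ and $|\eta^e_\rho-w|+k\rho\leq (k+1)\rho+2r=6r$, so $\overline{A_k}\subset B(w,6r)$. Split cases with $\delta_0=(k-1)\rho$: if $\delta\geq\delta_0$ use the trivial bound $u(x)\leq S\leq (S/\delta_0)\,\delta$, which already gives a constant of the required form. If $\delta<\delta_0$, then $\rho<|x-\eta^e_\rho|<k\rho$, so $x\in A_k$. The boundary of $A_k\cap\Omega$ decomposes into the single point $\{\eta\}$ on the inner sphere (where $u=0=\doublehat v$), the piece of the outer sphere in $\overline{\Omega}$ (where $u\leq S\leq m\rho=\doublehat v$), and $\partial\Omega\cap\overline{A_k}\subset\partial\Omega\cap B(w,6r)$ (where $u=0\leq\doublehat v$). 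Lemma~\ref{le:comp-weak} applied on the connected component of $A_k\cap\Omega$ containing $x$ yields $u\leq\doublehat v$ there, and since $\doublehat v=0$ on the inner sphere together with $|D\doublehat v|\leq\nu$, we conclude $u(x)\leq\doublehat v(x)\leq\nu(|x-\eta^e_\rho|-\rho)=\nu\delta$. Combining the two cases gives $u(x)\leq c\,d(x,\partial\Omega)$ with $c=c(\lambda,\Lambda,n,\phi,r_e,r^{-1}S)$.

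\textbf{Main obstacle.} The delicate point is the choice of scales. Containing $x$ in the annulus forces $\rho\gtrsim\delta/(k-1)$; fitting $\overline{A_k}$ inside $B(w,6r)$ forces $\rho\lesssim r/(k+1)$; and keeping $\nu$ (hence the final constant $c$) independent of $\delta$ forces the outer-sphere value $m\rho$ to dominate $S$ with $\rho$ of order $r$ rather than $\delta$. The resolution is to fix $\rho$ as a definite fraction of $r$ (so $M=S/\rho\sim S/r$ is uniform), handle the case $\delta\geq (k-1)\rho$ separately by a trivial bound, and exploit the single-point tangency $\overline{B(\eta^e_\rho,\rho)}\cap\overline{\Omega}=\{\eta\}$ so that the troublesome inner-sphere portion of $\partial(A_k\cap\Omega)$ collapses to the one point where both $u$ and $\doublehat v$ vanish.
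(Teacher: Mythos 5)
Your proposal is correct and follows essentially the same route as the paper: place an exterior tangent ball at the nearest boundary point $\eta$, invoke Lemma \ref{le:barrier_super} to build a strict classical supersolution vanishing on the inner sphere and dominating $\sup u$ on the outer sphere, compare via Lemma \ref{le:comp-weak} on the annulus intersected with $\Omega$, and read off the bound from the gradient estimate. If anything, your version is slightly more careful than the paper's: you work with the annulus width $k\rho$ actually furnished by Lemma \ref{le:barrier_super} (rather than tacitly taking the outer radius to be $2r$) and dispose of the case $\delta \geq (k-1)\rho$ by a trivial bound, which is legitimate since, as the proof of that lemma shows, $k$ is independent of $M$.
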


\noindent
\begin{proof}
We follow the lines of Aikawa--Kilpel\"ainen--Shanmugalingam--Zhong \cite[Lemma 3.3]{AKSZ07} and
take $x \in \Omega \cap B(w, r)$ and let $\eta \in \partial \Omega$ be such that $d(x, \partial\Omega) = |x - \eta|$.
By the exterior ball condition at $\eta$ we can find a point $\eta^e$ such that $B(\eta^e, r_e) \subset\mathbf{R}^n \setminus\Omega$
and $\eta \in \partial B(\eta^e, r_e)$.
Now, take the point $\eta^e_{r}$ which is such that $\eta= \eta^e_{r} + r \frac{\eta-\eta^e}{|(\eta-\eta^e)|}$ (i.e., $\eta^e_r \not \in \Omega$ lies on the straight line $\gamma$ between $\eta$ and $\eta^e$ on a distance $r$ from the boundary $\partial \Omega$)

Applying Lemma \ref{le:barrier_super} we ensure existence of a classical strict supersolution $\doublehat v$ in the annulus $A = B(\eta^e_r,2r)\setminus \overline{B(\eta^e_r,r)}$
%Using the monotonicity in \eqref{eq:ass_non-decreasing_in_u} we realize that we can lower $\check u$ and thereby we assume that the boundary values satisfy
satisfying $\hat{v} = 0$ on $\partial B(y, r)$ and $\hat{v} = mr$ on $\partial B(y, 2r)$. Moreover, we are free to choose $m$ such that $m \geq M = r^{-1}\sup_{B(w,6r)\cap \Omega} u$ so that $\doublehat v > u$ on the boundary of $B(\eta^e_r,2r) \cap \Omega$ (since $\doublehat v \geq 0$ and $u$ vanishes on $\Omega \cap B(w,6r)$).  By the weak comparison principle Lemma~\ref{le:comp-weak}, we obtain therefore $u \leq \doublehat v$
in $B(\eta^e_{r}, 2 r)\cap\Omega$ and since $x$ is in this set the result will follow by showing that $\doublehat v$ vanishes
at least as fast as $d(x, \partial\Omega)$ when $x \to \partial \Omega$.
However, this follows by the gradient bound in Lemma \ref{le:barrier_super} with constant depending on
$\lambda, \Lambda, n, \phi, r_e$ and $M = r^{-1} \sup_{B(w,6r)\cap \Omega} u$.
\end{proof}\\

%\begin{remark}\label{re:tjohej}
%If \eqref{eq:ass_phi_inf} or \eqref{eq:ass_phi_near} holds, in addition to the assumptions in Theorem \ref{le:upper},
%then the results of Theorem \ref{le:upper} holds in $\Omega \cap B(w, r)$.
%\end{remark}
%
\begin{figure}%[!hbt]
\begin{center}
\includegraphics[width=14cm,height=9.1cm,viewport=0 0 460 365,clip]{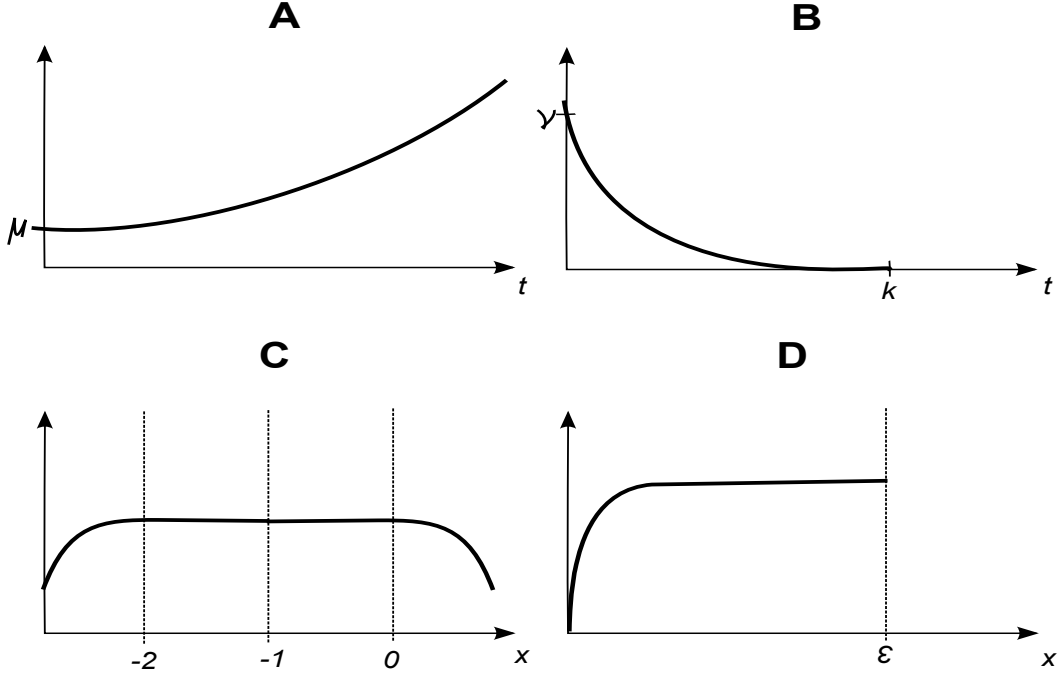}
\end{center}
\caption{(A) The function $g(t)$ in the proof of Lemma \ref{le:barrier_sub}, (B) $f(t)$ in the proof of Lemma \ref{le:barrier_super}, (C) the function $H(x)$ showing the necessity of assumption \eqref{eq:ass_phi_near} and (D) the function $F(x)$ showing the necessity of assumption \eqref{eq:ass_phi_inf_new}.}
\label{fig:functions}
\end{figure}

We now comment on the necessity of the assumptions in Theorem \ref{le:lower} and Theorem \ref{le:upper}.
Indeed, necessity of \eqref{eq:ass_phi_near} for the lower bound follows by the counterexample in Remark \ref{remark:counter1} as
the derivative of $1 - H$ approaches zero as the function itself vanishes near the origin.
To discuss the necessity of \eqref{eq:ass_phi_inf_new} for the upper bound,
assume that it does not hold.
Then there exist an $\varepsilon > 0$ and $C < \infty$ such that
\begin{align*}
 \int_{0}^{\varepsilon} f(t) dt \leq C  \quad\text{for all} \quad \nu > 0 %\tag{$\phi_C$}
\end{align*}
where $f(t)$ solves the differential equation $f'(t) = - \phi(f(t))$ with initial condition $f(0) = \nu$.
Define
\begin{align*}
F(x) =   \int_{0}^{x} f(t) dt, \quad \text{whenever}\quad x \in  (0,\varepsilon)
\end{align*}
and note that then $F'(x) = f(x)$ and $F(x)$ satisfies the differential equation
$$
F'' + \phi(|F'|) = 0.
$$
Moreover, $F'(0) = f(0) = \nu$ which explodes as $\nu \to \infty$, see Figure \ref{fig:functions}.
This contradicts the upper bound in Theorem \ref{le:upper} and we conclude the following remark.

\begin{remark}\label{re:necessity-phi-inf}
Condition \eqref{eq:ass_phi_near} is necessary for the lower bound in Theorem \ref{le:lower} and
condition \eqref{eq:ass_phi_inf_new} is necessary for the upper bound in Theorem \ref{le:upper}.
\end{remark}

As assumption \eqref{eq:ass_phi_inf_new} is a bit technical we comment on some functions $\phi$ satisfying it.
To this end we restrict to $\phi(s) = s^k$, $k \geq 1$, and note that the solution to
$
f'(t) = - f(t)^k
$
with initial condition $f(0) = \nu$ yields
$$
f(t) = \left\{
\begin{array}{ll}
\nu e^{-t} & \text{if $k = 1$}, \\
\left((k - 1) t + \nu^{1-k}\right)^\frac{1}{1-k} & \text{if $k > 1$}.
\end{array}
\right.
$$
Therefore,
$$
\int_0^\epsilon f(t) dt
= \left\{
\begin{array}{ll}
\nu\left(1 - e^{-\epsilon}\right)  & \text{if $k = 1$},\\
\log(1 + \epsilon \nu) & \text{if $k = 2$}, \\
\frac{k-1}{2-k} \left( \nu^{2-k} - \left((k-1) \epsilon + \nu^{1-k}\right)^\frac{2-k}{1-k}\right) & \text{otherwise},
\end{array}
\right.
$$
and the above integral diverges for any $\epsilon > 0$ whenever $1 \leq k \leq 2$
and converges if $k > 2$.
Thus assumption \eqref{eq:ass_phi_inf_new} is satisfied if and only if $k \leq 2$
and it follows that condition \eqref{eq:ass_Cs^2} implies \eqref{eq:ass_phi_inf_new}.
Moreover, as
\begin{align*}
\int_{1}^{\infty} \frac{dt}{\phi(t)} = \infty
\end{align*}
ensures that \eqref{eq:ass_Cs^2} holds we have proved:

\begin{remark}\label{re:assumption-phi-inf}
The Keller-Osserman type condition \eqref{eq:ass_phi_inf} implies \eqref{eq:ass_Cs^2} which in turn implies \eqref{eq:ass_phi_inf_new}.
\end{remark}

Using the lower and upper estimates in Theorems \ref{le:lower} and \ref{le:upper} we conclude that
positive viscosity solutions vanishing on a portion of the boundary are comparable with the distance function near the boundary:

\begin{corollary}\label{corr:dist}
Let $\Omega \subset \mathbf{R}^n$ be a domain satisfying the ball condition with radius $r_b$.
Assume that \eqref{eq:ass_deg_ellipt}, \eqref{eq:ass_drift_super}, \eqref{eq:ass_drift_sub_ast}, \eqref{eq:ass_phi_near} and \eqref{eq:ass_phi_inf_new}  hold.
Let $w \in \partial \Omega$ and $0 < r < r_b$.
Assume that $u$ is a positive viscosity solution of \eqref{eq:main} in $\Omega \cap B(w, 6r)$,
satisfying $u = 0 = v$ on $\partial \Omega \cap B(w,6r)$.
Then there exists a constant $c$ such that
%
%\begin{align*}
%\frac{1}{c} \leq \frac{u(x)}{v(x)} \leq c \qquad \text{for} \quad x \in \Omega \cap B(w, r).
%\end{align*}
%
\begin{align*}
c^{-1}\, d(x, \partial \Omega) \leq u(x) \leq c \,d(x, \partial \Omega) \qquad \text{whenever} \quad x \in \Omega \cap B(w, r).
\end{align*}
The constant $c$ has dependence according to Theorems \ref{le:lower} and \ref{le:upper}.
\end{corollary}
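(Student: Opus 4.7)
The plan is to invoke Theorem \ref{le:lower} and Theorem \ref{le:upper} in tandem, exploiting the fact that a positive viscosity solution is simultaneously a positive viscosity subsolution and a positive viscosity supersolution. Since the ball condition with radius $r_b$ is assumed, the domain satisfies both the interior ball condition and the exterior ball condition with that same radius, so the geometric hypotheses of the two one-sided theorems are met at the boundary point $w$.

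More concretely, I would first observe that all structural assumptions required by both theorems are included in the hypotheses of the corollary: \eqref{eq:ass_deg_ellipt} is common, Theorem \ref{le:lower} needs \eqref{eq:ass_drift_sub_ast} and \eqref{eq:ass_phi_near}, while Theorem \ref{le:upper} needs \eqref{eq:ass_drift_super} and \eqref{eq:ass_phi_inf_new}, all of which are assumed. Then, applying Theorem \ref{le:upper} to $u$ (regarded as a viscosity subsolution, strictly positive in the interior by the SMaP, Theorem \ref{thm:SMP}) on $\Omega \cap B(w,6r)$ yields a constant $c_1 = c_1(\lambda,\Lambda,n,\phi,r_b, r^{-1}\sup_{B(w,6r)\cap\Omega} u)$ with
\begin{align*}
u(x) \leq c_1\, d(x,\partial\Omega) \quad \text{for } x \in \Omega \cap B(w,r).
\end{align*}
Applying Theorem \ref{le:lower} to $u$ (regarded as a viscosity supersolution) produces a constant $c_2 = c_2(\lambda,\Lambda,n,\phi,r_b, r^{-1}\inf_{\Gamma_{w,r}} u, C^\ast)$ with
\begin{align*}
c_2\, u(x) \geq d(x,\partial\Omega) \quad \text{for } x \in \Omega \cap B(w,r).
\end{align*}
Setting $c = \max\{c_1,c_2\}$ gives the two-sided comparability.

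The only mild technical issue is a bookkeeping one: the two theorems ask for $0<2r<r_i$ respectively $0<2r<r_e$, whereas the corollary is stated with $0<r<r_b$. I would handle this by applying each theorem at the radius $r/2$ (so that $2(r/2) = r < r_b$) and then noting $\Omega \cap B(w,r/2) \subset \Omega \cap B(w,r)$, or equivalently by viewing the corollary as shrinking to a slightly smaller radius. The positivity of $u$ in the interior that is needed to make $\inf_{\Gamma_{w,r}} u > 0$ (so the constant in Theorem \ref{le:lower} is finite) follows from the SMaP applied to $-u$ — since $u>0$ somewhere and $u$ is a supersolution, $u$ cannot attain a zero minimum on the compact set $\Gamma_{w,r}\subset\Omega$. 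Since the two theorems have already done all the real work (the barrier constructions of Lemmas \ref{le:barrier_sub2} and \ref{le:barrier_super}, and the comparison argument via Lemma \ref{le:comp-weak}), there is no substantive obstacle here; the corollary is a direct merger of the two one-sided estimates, and the main care needed is only to check that the parameter dependences quoted in the two theorems combine into the single constant $c$ claimed.
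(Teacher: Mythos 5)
Your proposal is correct and matches the paper's (implicit) argument: the corollary is stated as a direct consequence of Theorems \ref{le:lower} and \ref{le:upper}, applied to $u$ as a supersolution and a subsolution respectively, with the ball condition supplying both the interior and exterior ball hypotheses. Your extra care about the radius bookkeeping ($2r<r_i$ versus $r<r_b$) and about $\inf_{\Gamma_{w,r}}u>0$ being positive is a reasonable tidying of details the paper leaves unstated.
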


An immediate consequence of Corollary \ref{corr:dist} is the Boundary Harnack inequality:

\begin{corollary}[Boundary Harnack inequality]\label{thm:harnack}
Let $\Omega$, $w$, $r$ and $u$ be as in Corollary \ref{corr:dist} and suppose that $v$ is another solution satisfying the same properties as $u$.
Then there exists a constant $c$ such that
\begin{align*}
\frac{1}{c} \leq \frac{u(x)}{v(x)} \leq c \qquad \text{whenever} \quad x \in \Omega \cap B(w, r).
\end{align*}
The constant $c$ has dependence according to Theorems \ref{le:lower} and \ref{le:upper}.
\end{corollary}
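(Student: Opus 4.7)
The plan is simply to read the Boundary Harnack inequality off of Corollary \ref{corr:dist} by applying it to $u$ and to $v$ separately and then taking the ratio. Since both $u$ and $v$ are positive viscosity solutions of \eqref{eq:main} in $\Omega \cap B(w,6r)$ vanishing on $\partial\Omega \cap B(w,6r)$, and since $\Omega$ satisfies the ball condition with radius $r_b$ and $0<r<r_b$, the hypotheses of Corollary \ref{corr:dist} hold for each of $u$ and $v$ individually.

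Applying Corollary \ref{corr:dist} to $u$ yields a constant $c_u$ such that
\begin{align*}
c_u^{-1} d(x,\partial\Omega) \leq u(x) \leq c_u\, d(x,\partial\Omega) \quad \text{whenever} \quad x \in \Omega \cap B(w,r),
\end{align*}
and likewise applied to $v$ gives $c_v$ with $c_v^{-1} d(x,\partial\Omega) \leq v(x) \leq c_v\, d(x,\partial\Omega)$ on the same set. Dividing the inequality for $u$ by that for $v$ makes the common factor $d(x,\partial\Omega)$ cancel, so that
\begin{align*}
\frac{u(x)}{v(x)} \leq \frac{c_u\, d(x,\partial\Omega)}{c_v^{-1}\, d(x,\partial\Omega)} = c_u c_v,
\end{align*}
and similarly $u(x)/v(x) \geq (c_u c_v)^{-1}$. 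Taking $c = c_u c_v$ gives the claim. The dependencies of $c_u$ and $c_v$ are exactly those inherited from Theorems \ref{le:lower} and \ref{le:upper} (involving $\lambda,\Lambda,n,\phi,r_b,C^\ast$ and the ratios $r^{-1}\inf_{\Gamma_{w,r}}\cdot$, $r^{-1}\sup_{B(w,6r)\cap\Omega}\cdot$ of $u$ and $v$), so $c$ has the dependence claimed in the statement.

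There is really no obstacle here since $d(x,\partial\Omega) > 0$ for $x \in \Omega$, so the division is legitimate throughout $\Omega \cap B(w,r)$, and the two-sided comparability of each solution with the distance function is precisely what Corollary \ref{corr:dist} delivers. The only thing worth double-checking is that the hypotheses for both the lower and upper estimates are simultaneously assumed in Corollary \ref{corr:dist}, which they are (\eqref{eq:ass_deg_ellipt}, \eqref{eq:ass_drift_super}, \eqref{eq:ass_drift_sub_ast}, \eqref{eq:ass_phi_near} and \eqref{eq:ass_phi_inf_new}), so the argument requires no additional assumption beyond those already in force.
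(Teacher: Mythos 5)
Your argument is correct and is exactly what the paper intends: it presents the Boundary Harnack inequality as an immediate consequence of Corollary \ref{corr:dist}, obtained by applying the two-sided comparison with $d(x,\partial\Omega)$ to $u$ and $v$ separately and taking the ratio. Nothing further is needed.
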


\section{A variable exponent eigenvalue problem}
\label{sec:variable-exp}

\setcounter{theorem}{0}
\setcounter{equation}{0}

In this section we discuss our results in the setting of the eigenvalue problem
\begin{align}\label{eq:eig}
\Delta_{p(\cdot)} := \nabla \cdot \left( |\nabla u|^{p(x)-2} \nabla u\right) = a|u|^{q(x)-2} u,
\end{align}
where $1 < p^- \leq p(x) \leq p^+< \infty$ is Lipschitz continuous, $q(x) \geq 2$ and $a \geq 0$.
Solutions of \eqref{eq:eig} are often considered in the following weak sense:
A function $u \in W^{1, p(x)}(\Omega)$ is a weak solution of \eqref{eq:eig} in $\Omega$ if
\begin{align*}%\label{}
\int_{\Omega} \left( \langle |\nabla u|^{p(x)-2} \nabla u, \nabla \psi \rangle + |u|^{q(x)} u \psi \right) dx,
\end{align*}
whenever $\psi \in W_0^{1, p(x)}(\Omega)$, see e.g. Zhang \cite{Z05}.
Following arguments from Juutinen--Lukkari--Parviainen \cite{JLP10} and Julin \cite{J13} for variable exponent $p$-Laplace equation we conclude that continuous weak solutions of \eqref{eq:eig} are also viscosity solutions of
$$
F(x, u, Du, D^2u) = D^2 u + (p(x) - 2)\Delta_\infty u + \log |Du| \langle Dp Du\rangle + |u|^{q(x)} u,
$$
where $\Delta_\infty u$ denotes the infinity Laplace operator.
Moreover, following  Julin \cite{J13} we see that the above operator satisfies \eqref{eq:ass_ellipt}, \eqref{eq:ass_drift_super_old} and \eqref{eq:ass_drift_sub_ast} with
\begin{align}\label{hallojj}
\lambda = \min\{ 1, p^{-} - 1\}, \quad \Lambda = \max\{ 1, p^{+} - 1\}, \quad \phi(t) = C\left( |\log t| + 1\right) t  \quad \textrm{and} \quad
\gamma(t) = t,
\end{align}
where $\phi(t) = C\left( |\log t| + 1\right) t$ satisfies both the Osgood condition \eqref{eq:ass_phi_near} and the Keller-Osserman condition \eqref{eq:ass_phi_inf}. In conclusion:

\begin{corollary}
All Theorems in Sections \ref{sec:auxiliary}, \ref{sec:maximum} and \ref{sec:boundary} apply to continuous weak solutions of \eqref{eq:eig}.
The dependence of constants in the Theorems of Section \ref{sec:boundary} can be traced via \eqref{hallojj}.
\end{corollary}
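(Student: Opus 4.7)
The plan is to proceed in three steps. First, I formally rewrite \eqref{eq:eig} in nondivergence form: for a $C^2$-function with $Du\neq 0$, the chain rule applied to $\nabla\cdot(|Du|^{p(x)-2}Du)$ gives, after dividing by the positive factor $|Du|^{p(x)-2}$, a quasilinear scalar operator involving $\Delta u$, the infinity-Laplacian $\Delta_\infty u$, a first-order drift of the form $\log|Du|\,\langle Dp(x),Du\rangle$, and the zero-order term $a|u|^{q(x)-2}u$. This matches the operator $F$ displayed just before \eqref{hallojj}.

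Second, I would show that a continuous weak solution of \eqref{eq:eig} is a viscosity solution of $F=0$. For the pure $p(x)$-Laplacian this is proved in Juutinen--Lukkari--Parviainen \cite{JLP10} and used in Julin \cite{J13}. The extra term $a|u|^{q(x)-2}u$ is a continuous function of $u$ alone (no $Du$ or $D^2u$ dependence), so the standard test-function argument — testing the weak formulation against a shrinking bump around a touching point — passes this term to its pointwise value in the limit without modifying the structure of the proof.

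Third, I would verify the structural hypotheses. Diagonalising $D^2\varphi$ in an orthonormal basis whose first vector is $D\varphi/|D\varphi|$ shows that the second-order part of $F$ is controlled above by $\mathcal{P}^+_{\lambda,\Lambda}$ and below by $\mathcal{P}^-_{\lambda,\Lambda}$ with $\lambda=\min\{1,p^--1\}$ and $\Lambda=\max\{1,p^+-1\}$. The drift $-\log|p|\,\langle Dp(x),p\rangle$ is pointwise bounded by $C(\lvert\log|p|\rvert+1)|p|$, where $C$ is a multiple of the Lipschitz constant of $p$, so the choice $\phi(t)=C(\lvert\log t\rvert+1)t$ realises \eqref{eq:ass_drift_super}. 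The zero-order term is absorbed into $\gamma(r)=a|r|^{q(x)-1}$, which satisfies $\gamma(r)\leq C^\ast\phi(r)$ for $r\leq 1$ because $q\geq 2$, yielding \eqref{eq:ass_drift_sub_ast}. The Osgood condition \eqref{eq:ass_phi_near} for $\phi$ follows from the substitution $s=\lvert\log t\rvert$:
$$
\int_0^1 \frac{dt}{C\,t(\lvert\log t\rvert+1)} = \int_0^\infty \frac{ds}{C(s+1)} = \infty,
$$
and the analogous computation at infinity yields \eqref{eq:ass_phi_inf}, which by Remark \ref{re:assumption-phi-inf} implies \eqref{eq:ass_phi_inf_new}.

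The main obstacle is the second step: the weak-to-viscosity passage for the pure $p(x)$-Laplacian is well established, but one must verify that the $u$-dependent lower order perturbation $a|u|^{q(x)-2}u$ does not disturb it. Since that perturbation involves no derivatives of $u$ and is continuous in $u$, the verification is essentially a bookkeeping exercise. Once all assumptions are in place, every theorem of Sections \ref{sec:auxiliary}--\ref{sec:boundary} applies to continuous weak solutions of \eqref{eq:eig}, and the constant dependence stated in Section \ref{sec:boundary} is read directly off Theorems \ref{le:lower} and \ref{le:upper} with the parameters listed in \eqref{hallojj}.
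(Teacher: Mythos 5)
Your proposal is correct and follows essentially the same route as the paper: rewrite \eqref{eq:eig} in nondivergence form, invoke Juutinen--Lukkari--Parviainen and Julin for the weak-to-viscosity passage, and verify the structure conditions with the parameters of \eqref{hallojj}. The only difference is that you spell out details (the Pucci bounds via diagonalisation, the Osgood integral, the absorption of the zero-order term into $\gamma$) that the paper delegates to the cited references.
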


%\subsection{Existence}\label{px-existence}
We proceed by noting that existence of weak solutions for $\Delta_{p(\cdot)} = a|u|^{p(x)-2}u$ follows by Mendez \cite{Mendez}, with
\[
a = \frac{\int_\Omega |\nabla u(x)|^{p(x)}\,dx}{\int_\Omega |u(x)|^{p(x)}\,dx}.
\]
%
%\subsection{Comparison principle}\label{px-comparison-principle}
Put
\newcommand{\psip}{\psi_{p(\cdot)}}
\[
\psip(u) = |u|^{p(\cdot)-2}u.
\]
%and
%\[
%\Delta_{p(\cdot)}u = \div |\nabla  u|^{p(\cdot)-2}\nabla u.
%\]
Concerning uniqueness we give the following proof of the comparison principle
based on Fleckinger-Pell\'e--Tak\'a\v{c} \cite[Proposition 4.1]{FPT}:

\begin{theorem}
Assume that $u, v \in W^{1,p(\cdot)}(\Omega)$, are weak solutions, respectively, of
\begin{align}\label{u-sol}
-\Delta_{p(\cdot)} u = a\psip(u) + f, \qquad u = g \textrm{ on } \partial\Omega,
\end{align}
and
\begin{align}\label{v-sol}
-\Delta_{p(\cdot)} v = a\psip(v) + f', \qquad v = g' \textrm{ on } \partial\Omega,
\end{align}
with $a<0$, $f \leq f'$ in $L^{p'(\cdot)}(\Omega)$ and $g \leq g'$ in %(continuous, more? cf. AL; space of)
$\Omega$. Then $u \leq v$ a.e. in $\Omega$.
\end{theorem}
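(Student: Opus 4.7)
The plan is to subtract the two weak formulations, test with $\varphi = (u-v)^+$, and use monotonicity of both the $p(\cdot)$-Laplacian operator and the zero-order nonlinearity (noting that $a<0$ makes $-a\psip$ monotone increasing in the sense we need). The boundary hypothesis $g \leq g'$ will guarantee that $\varphi \in W_0^{1,p(\cdot)}(\Omega)$ so that it is an admissible test function.

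Concretely, writing the weak forms of \eqref{u-sol} and \eqref{v-sol} against the same test function $\varphi \in W_0^{1,p(\cdot)}(\Omega)$ and subtracting yields
\begin{align*}
\int_\Omega \bigl\langle |\nabla u|^{p(x)-2}\nabla u - |\nabla v|^{p(x)-2}\nabla v,\, \nabla\varphi \bigr\rangle\,dx
= a\!\int_\Omega (\psip(u)-\psip(v))\varphi\,dx + \int_\Omega (f-f')\varphi\,dx.
\end{align*}
I would then take $\varphi = (u-v)^+$, which by $g\leq g'$ (together with the lattice structure of $W^{1,p(\cdot)}$) lies in $W_0^{1,p(\cdot)}(\Omega)$, and observe that $\nabla\varphi = \nabla(u-v)\mathbf 1_{\{u>v\}}$. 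On $\{u>v\}$ the standard monotonicity inequality
\[
\bigl\langle |\xi|^{p-2}\xi - |\zeta|^{p-2}\zeta,\, \xi-\zeta\bigr\rangle \geq 0 \qquad (p>1)
\]
shows that the left-hand side is nonnegative. For the right-hand side, monotonicity of $s\mapsto|s|^{p(x)-2}s$ gives $(\psip(u)-\psip(v))(u-v)^+\geq 0$, and since $a<0$, the first term is $\leq 0$; the second is $\leq 0$ because $f\leq f'$ and $\varphi\geq 0$.

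Thus both sides vanish, which forces two separate conclusions: by strict monotonicity of the $p(x)$-Laplacian operator we get $\nabla u = \nabla v$ a.e.\ on $\{u>v\}$, hence $\nabla(u-v)^+ \equiv 0$ a.e.\ in $\Omega$; and because $(u-v)^+ \in W_0^{1,p(\cdot)}(\Omega)$, a function with zero gradient and zero boundary trace must vanish identically. Therefore $u\leq v$ a.e.\ in $\Omega$.

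The main obstacle is a bookkeeping one rather than a conceptual one: verifying that $(u-v)^+ \in W_0^{1,p(\cdot)}(\Omega)$ under the inequality $g\leq g'$ on $\partial\Omega$ requires the lattice properties of variable-exponent Sobolev spaces with Lipschitz exponent, and the absorption of the zero-order term depends crucially on the sign assumption $a<0$. (If instead $a>0$, the zero-order term would add a nonsign-definite contribution and the present single-testing argument would break down; one would then need the Díaz–Saa inequality or a Picone-type identity, as in \cite{FPT}.) Beyond these points the argument is a direct monotonicity computation.
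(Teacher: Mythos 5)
Your proposal is correct and follows essentially the same route as the paper: test the difference of the two weak formulations with $(u-v)^+$ (admissible since $g\le g'$), and use monotonicity of both $\xi\mapsto|\xi|^{p(x)-2}\xi$ and $s\mapsto\psip(s)$ together with $a<0$ and $f\le f'$ to force both sides to vanish. The only (harmless) difference is in the endgame: the paper concludes by contradiction from the \emph{strict} monotonicity of the zero-order term $\psip(u)$ on $\{u>v\}$, whereas you conclude from the gradient term that $\nabla(u-v)^+=0$ and then invoke the zero trace; both closings are valid.
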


\noindent
\begin{proof}
Set $w = u - v$ and $w^+ = \max\{w,0\}$, $w^- = \max\{-w,0\}$. Then $w = w^+ - w^-$ by definition, and $w^+ \in W^{1,p(\cdot)}(\Omega)$ with the trace $w^+ = 0$ on $\partial\Omega$.
By multiplying \eqref{u-sol} with $w^+$, integrating over $\Omega$, and applying integration by parts, we get
\begin{align*}
\int_\Omega |\nabla u|^{p(\cdot)-2} \langle \nabla u, \nabla w^+\rangle \,dx &= -\int_\Omega \nabla\cdot( |\nabla u|^{p(\cdot)-2}\nabla u  )w^+\,dx \\ &= a \int_\Omega |u|^{p(\cdot)-2}u w^+\,dx + \int_\Omega fw^+\,dx.
\end{align*}
Applying the same procedure to \eqref{v-sol}, subtracting the two equations, and remembering $f \leq f'$, we get
\begin{align*}
& \int_\Omega \left[|\nabla u|^{p(\cdot)}\nabla u - |\nabla v|^{p(\cdot)-2}\nabla v \right]\cdot \nabla w^+\,dx - a \int_\Omega \left[|u|^{p(\cdot)-2}u - |v|^{p(\cdot)-2}v\right]w^+\,dx \\ &= \int_\Omega (f-f')w^+\,dx \leq 0.
\end{align*}
Next consider the sets
\[
\Omega^+ = \{ x \in \Omega \mid u(x) > v(x) \} \quad \text{and} \quad \Omega^-_0 = \{ x \in \Omega \mid u(x) \leq v(x) \}.
\]
%and
%\[
%\Omega^-_0 = \{ x \in \Omega \mid u(x) \leq v(x) \}.
%\]
Note that $\Omega = \Omega^+ \cup \Omega^-_0$, and that in $\Omega^+$ we have $w^+ = u-v > 0$ and $\nabla w^+ = \nabla u - \nabla v$, while in $\Omega^-_0$ we have $w^+ = \nabla w^+ = 0$ (Gilbarg-Trudinger \cite[Lemma 7.6]{GT}).
Using this the previous equation can be written as
\begin{align}\label{contra}
\nonumber & \int_{\Omega^+} \left(|\nabla u|^{p(\cdot)-2}\nabla u - |\nabla v|^{p(\cdot)-2}\nabla v\right]\cdot \nabla (u-v)\,dx \\& - a \int_{\Omega^+} \left[|u|^{p(\cdot)}u - |v|^{p(\cdot)-2}v\right](u-v)\,dx \\
\nonumber &= \int_{\Omega^+} (f-f')w^+\,dx \leq 0.
\end{align}
Next assume that our claim of $u \leq v$ does not hold in a set of positive measure, that is, assume that $|\Omega^+| > 0$. We will show this leads to a contradiction by showing that in this case the left-hand side of \eqref{contra} will be positive.
Firstly, since $\psip$ is strictly monotonously increasing for a.e. $x \in \Omega$, and $a<0$, we have
\[
- a \int_{\Omega^+} \left[|u|^{p(\cdot)-2}u - |v|^{p(\cdot)-2}v\right](u-v)\,dx > 0.
\]
Secondly, a similar argument by the monotonicity of $A(\xi) = |\xi|^{p(x)-2}\xi$ gives
\[
\int_{\Omega^+} \left[\psip(\nabla u) - \psip(\nabla v)\right]\cdot \nabla (u-v)\,dx \geq 0.
\]
This gives the contradiction, and the result.
\end{proof}\\

%\subsection{Wolanski-type barriers}\label{px-barriers-wolanski}

We end the paper by noting that in case of the eigenvalue problem \eqref{eq:eig},
our auxiliary functions constructed in Section \ref{sec:auxiliary} can be replaced by those constructed in
Adamowicz--Lundstr\"om \cite[Lemma 4.1]{AL16}.
Indeed, with restriction to \eqref{eq:eig} we can prove all our main results using the following Lemma:

%Theorems \ref{le:lower}, \ref{le:upper} and Corollaries \ref{corr:dist} and \ref{thm:harnack}
%can be proved also using the auxiliary functions constructed in \cite[Lemma 4.1]{AL16}.
%  \subsection{Wolanski-type barriers}\label{px-barriers-wolanski}

\begin{lemma}
Let $p(x) \in (p^-, p^+)$ be Lipschitz continuous, $q(x) \in (q^-, q^+)$, $M > 0$ and define
\[
\hat{u}(x) = \frac{M}{e^{-\mu} - e^{-4\mu}} \left(  e^{-\mu} - e^{-\mu \frac{|s-y|^2}{r^2}}\right) \quad \text{whenever} \quad x \in B(y,2r)\setminus B(y,r).
\]
Then there are constants $r_* = r_*(p^-,\|p\|_{L^\infty})$ and $\mu_* = \mu_*(p^+,p^-,q^-, q^+,n,\|p\|_{L^\infty},M)$
such that for $r \leq r_*$ and $\mu \geq \mu_*$ it holds that $\hat{u}(x) = M$ on $\partial B(y,2r)$,
$\hat{u}(x) = 0$ on $\partial B(y,r)$ and $\hat{u}$ is a classical supersolution to \eqref{eq:eig}.
%\begin{itemize}
%\item $r_* = r_*(p^-,\|p\|_{L^\infty})$ and
%\item $\mu_* = \mu_*(p^+,p^-,n,\|p\|_{L^\infty},M)$,
%\end{itemize}
%such that for $r \leq r_*$ and $\mu \geq \mu_*$:
%\begin{enumerate}
%\item $\hat{u}(x) = M$ on $\partial B(y,2r)$,
%\item $\hat{u}(x) = 0$ on $\partial B(y,r)$ and
%\item $\hat{u}$ is a $p(\cdot)$-supersolution.
%\end{enumerate}
\end{lemma}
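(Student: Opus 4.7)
The plan is to verify the classical supersolution inequality $\Delta_{p(\cdot)}\hat u \leq a|\hat u|^{q(\cdot)-2}\hat u$ by direct computation on the radial ansatz. Since $a \geq 0$ and $\hat u \geq 0$ on the annulus, the right-hand side is nonnegative, so it suffices to establish $\Delta_{p(\cdot)}\hat u \leq 0$ pointwise. The boundary values are immediate: at $|x-y|=r$ the exponent in $\hat u$ equals $1$ and the parenthesis vanishes, and at $|x-y|=2r$ it equals $4$, making the parenthesis equal to the denominator so that $\hat u = M$.

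For the main computation, I would differentiate the radial ansatz $\hat u(x)=g(\rho)$, $\rho = |x-y|$, with $g(\rho) = A(e^{-\mu} - e^{-\mu\rho^2/r^2})$ and $A = M/(e^{-\mu}-e^{-4\mu})$. Writing $\sigma = \rho^2/r^2 \in [1,4]$ and $E = (2A\mu/r^2)e^{-\mu\sigma}$, a short calculation gives $|\nabla\hat u| = E\rho$, $\Delta\hat u = E(n - 2\mu\sigma)$, and the normalized infinity-Laplace quantity $\langle D^2 \hat u\,\nabla\hat u,\nabla\hat u\rangle/|\nabla\hat u|^2 = E(1 - 2\mu\sigma)$. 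Expanding the $p(x)$-Laplacian via the chain rule then yields
\begin{align*}
\Delta_{p(\cdot)}\hat u = |\nabla\hat u|^{p(x)-2}\Bigl\{E\bigl[(n+p(x)-2) - 2\mu\sigma(p(x)-1)\bigr] + \log|\nabla\hat u|\,\langle\nabla p(x),\nabla\hat u\rangle\Bigr\},
\end{align*}
so the task reduces to showing the braced expression is nonpositive. For $\mu \geq (n+p^+-2)/(p^--1)$ the polynomial piece is bounded above by $-\mu\sigma(p(x)-1)E$, and the explicit formula for $|\nabla\hat u|$ gives $|\log|\nabla\hat u|| \leq 3\mu + C(r,M,n)$ with $C$ depending only logarithmically on its arguments. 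Hence the log-correction is bounded by $L(3\mu+C)E\rho$, where $L$ is the Lipschitz constant of $p$. Choosing $r_* = r_*(p^-,L)$ so that $12\,L\,r_* \leq p^--1$ absorbs the linear-in-$\mu$ piece into half the main term, and then taking $\mu \geq \mu_*(p^\pm,n,L,M)$ absorbs the logarithmic remainder, giving bracket $\leq -\tfrac12\mu\sigma(p(x)-1)E < 0$.

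Multiplying by the nonnegative factor $|\nabla\hat u|^{p(x)-2}$ preserves the sign, so $\Delta_{p(\cdot)}\hat u \leq 0 \leq a\hat u^{q(x)-1}$, which is the required supersolution inequality. The main obstacle I anticipate is the logarithmic drift term arising from the Lipschitz variability of $p(x)$: its leading $\mu$-order matches that of the main negative diffusion term (both proportional to $\mu E$), which is what forces the smallness condition on $r$ rather than letting us win by taking $\mu$ alone large. A minor subtlety is that $|\nabla\hat u|$ decays exponentially near $\partial B(y,2r)$, but since our target inequality is one-sided and the reaction term on the right is nonnegative, this decay poses no obstruction; in particular the explicit dependence on $q^\pm$ and on $a$ could be dropped from the argument, since any nonnegative RHS works.
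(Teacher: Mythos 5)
Your proposal is correct and follows essentially the same route as the paper: both reduce the supersolution property to the sign of the bracket $E\bigl[(n+p(x)-2)-2\mu\sigma(p(x)-1)\bigr]+\log|\nabla\hat u|\,\langle\nabla p,\nabla\hat u\rangle$, kill the $O(\mu)$ part of the logarithmic drift by taking $r_*$ small relative to $(p^--1)/\|\nabla p\|_{L^\infty}$, and absorb the remainder by taking $\mu$ large; the paper simply outsources the computation to the inequality (4.1)--(4.8) of Adamowicz--Lundstr\"om \cite{AL16}, whereas you carry it out explicitly. Two small points of difference. First, your claim that the dependence on $q^\pm$ and $a$ ``could be dropped'' is true only under the standing sign assumption $a\ge 0$ of \eqref{eq:eig}; the paper's proof additionally remarks that the lemma survives for $a<0$ by moving $-a|\hat u|^{q(x)-2}\hat u$ to the left-hand side and bounding it by $-a\max\{M^{q^+-1},M^{q^--1}\}$ (using $0\le\hat u\le M$), and it is precisely this case that produces the $q^\pm$-dependence of $\mu_*$. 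Second, your constant $C(r,M,n)$ contains $|\log r|$ while your $\mu_*$ does not list $r$; to get $\mu_*$ independent of $r$ as the statement requires, you should note (as the paper does) that $r\mapsto r|\log r|$ is increasing for $r\le r_*\le e^{-1}$, so the term $2r\|\nabla p\|_{L^\infty}|\log r|$ is bounded uniformly for $r\le r_*$ by its value at $r_*$. Neither point is a gap in the $a\ge 0$ case you set out to prove.
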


\noindent
\begin{proof}
The proof in Adamowicz--Lundstr\"om \cite[Lemma 4.1]{AL16} hinges on calculating $\Delta_{p(\cdot)} u$ for $u = A  e^{-\mu \frac{|s-y|^2}{r^2}} + B$ and picking the right $A$ and $B$. The estimates (4.1), (4.6), (4.8) in \cite{AL16} carry forward the need for $\Delta_{p(\cdot)} u \leq (\geq) 0$, and end in requiring the bounding of $\mu$ and $r$ so that
\begin{align}\label{crux}
\nonumber &\mu ( 8r\|\nabla p\|_{L^\infty} - 2(p^- - 1)) \\
&+ 2r \|\nabla p\|_{L^\infty} \left( \log \left( \frac{4}{1-e^{-3\mu}}\right)  + |\log M| + |\log r| \right) \\
\nonumber &+ n + p^+ - 2 \leq 0.
\end{align}
The first term is negative if $r \leq \frac{p^- - 1}{4\|\nabla p\|_{L^\infty}}$, and with an upper bound on $r$, $r |\log r|$ is increasing, meaning the second term does not blow up for smaller $r$. The inequality then holds for a big enough positive $\mu$.

In our case we have $\Delta_{p(\cdot)} u \leq a |u|^{q(x) - 2} u$  (in place of $\Delta_{p(\cdot)} u \leq 0$),
which enters on the RHS of \eqref{crux}.
We stick to the choices of $A$ and $B$ in \cite{AL16},
which give $0 \leq u \leq M$.

When $a \geq 0$, the inequality holds as in Adamowicz--Lundstr\"om.
We remark that this also holds when $a < 0$, since we can move the extra term to the LHS of \eqref{crux} and note that
\[
-a |u|^{q(x)-2}u
\leq -a \left| \frac{M}{e^{-\mu} - e^{-4\mu}} (e^{-\mu} - e^{-\frac{|x-y|^2}{r^2}}) \right|^{q(x)-1}
\leq -a \max\{ M^{q^+-1}, M^{q^--1}\}
\]
since
\[
0 < \frac{e^{-\mu} - e^{-\frac{|x-y|^2}{r^2}}}{e^{-\mu} - e^{-4\mu}} < 1.
\]
This means the additional term is bounded so that with $\mu$ large enough, the LHS of \eqref{crux} will still be non-positive as in \cite{AL16}. This completes the proof of the Lemma.
\end{proof}\\

\noindent
{\bf Acknowledgement.} We thank Tomasz Adamowicz for useful discussions, ideas and comments.
 The work of Niklas L. P. Lundstr\"om was partially supported by the Swedish research council grant 2018-03743.

%%%%%%%%%%%%%%%%%%%%%%%%%%%%%%%%%%%%%%%%%%%%%%%%%%%%%%%%%%%%%%%%%%%%%%%%%%%%%%%%%%%%
%%%%%%%%%%%%%%%%%%%%%%%%%%%%%%%%%%%%%%%%%%%%%%%%%%%%%%%%%%%%%%%%%%%%%%%%%%%%%%%%%%%%
%%%%%%%%%%%%%%%%%%%%%%%%%%%%%%%%%%%%%%%%%%%%%%%%%%%%%%%%%%%%%%%%%%%%%%%%%%%%%%%%%%%%
%%%%%%%%%%%%%%%%%%%%%%%%%%%%%%%%%%%%%%%%%%%%%%%%%%%%%%%%%%%%%%%%%%%%%%%%%%%%%%%%%%%%
%%%%%%%%%%%%%%%%%%%%%%%%%%%%%%%%%%%%%%%%%%%%%%%%%%%%%%%%%%%%%%%%%%%%%%%%%%%%%%%%%%%%
%%%%%%%%%%%%%%%%%%%%%%%%%%%%%%%%%%%%%%%%%%%%%%%%%%%%%%%%%%%%%%%%%%%%%%%%%%%%%%%%%%%%
%%%%%%%%%%%%%%%%%%%%%%%%%%%%%%%%%%%%%%%%%%%%%%%%%%%%%%%%%%%%%%%%%%%%%%%%%%%%%%%%%%%%
%%%%%%%%%%%%%%%%%%%%%%%%%%%%%%%%%%%%%%%%%%%%%%%%%%%%%%%%%%%%%%%%%%%%%%%%%%%%%%%%%%%%
%%%%%%%%%%%%%%%%%%%%%%%%%%%%%%%%%%%%%%%%%%%%%%%%%%%%%%%%%%%%%%%%%%%%%%%%%%%%%%%%%%%%
%%%%%%%%%%%%%%%%%%%%%%%%%%%%%%%%%%%%%%%%%%%%%%%%%%%%%%%%%%%%%%%%%%%%%%%%%%%%%%%%%%%%


\begin{thebibliography}{A}
%\bibitem{AM} {\sc E.\ Acerbi, G.\ Mingione}, \emph{Regularity results
%for a class of functionals with nonstandard growth},
%Arch.\ Ration.\ Mech.\ Anal., 156 (2001), no.~2, 121--140.

\bibitem{AL16}
%\bibitem[AL16]{AL16}
Adamowicz T., Lundstr\"om N.L.P.,
\emph{The boundary Harnack inequality for variable exonent $p$-Lalacian, Carleson estimates, barrier functions and ${(\cdot)}$-harmonic measures}.
Annali di Matematica Pura ed Applicata (1923-) 195.2 (2016): 623--658.

\bibitem{Ai01} {\sc H. Aikawa}, \emph{Boundary Harnack principle and Martin boundary for a uniform domain}, J. Math.
Soc. Japan 53, (2001), no. 1, 119--145.

\bibitem{AKSZ07}
%\bibitem[AKSZ07]{AKSZ07}
Aikawa H., Kilpel\"ainen T., Shanmugalingam N., Zhong X.,
\emph{Boundary Harnack principle for $p$-harmonic functions in smooth euclidean domains},
Potential Anal., 26 (2007), no. 3, 281--301.

\bibitem{An78} {\sc A. Ancona}, \emph{Principe de Harnack \`a la fronti\`ere et th\'eor\`eme de Fatou pour un op\'erateur elliptique dans un domaine lipschitzien}, Ann. Inst. Fourier (Grenoble), 28 (1978), no. 4, 169--213.

\bibitem{AJ17}
Benny A., Julin V.,
\emph{A Carleson type inequality for fully nonlinear elliptic equations with non-Lipschitz drift term.}
Journal of Functional Analysis 272.8 (2017): 3176--3215.

%\bibitem{lower_order}
%{\sc B. Avelin, N. L. P. Lundstr\"om, K. Nystr\"om}, \emph{Boundary estimates for solutions to operators of
%$p$-Laplace type with lower order terms}, J. Differential Equations, 250, Issue 1, (2011), 264--291.af

%\bibitem{avelin_kaj}
%{\sc B. Avelin, K. Nystr\"om}, \emph{Estimates for Solutions to Equations of p-Laplace type in Ahlfors regular NTA-domains}, J. Funct. Anal., 266 %(2014), no. 9, 5955--6005.

\bibitem{BBB91} {\sc R.\ Ba\~nuelos, R.\ Bass, K.\ Burdzy}, \emph{H\"older domains and the boundary Harnack principle},
Duke Math. J., 64(1), 195--200, (1991).

\bibitem{BG19}
Bardi M., Goffi A.,
\emph{New strong maximum and comparison principles for fully nonlinear degenerate elliptic PDEs}.
Calculus of Variations and Partial Differential Equations 58.6 (2019): 184.

\bibitem{BD99}
Bardi M., Da Lio F.,
\emph{On the strong maximum principle for fully nonlinear degenerate elliptic equations.}
Archiv der Mathematik 73.4 (1999): 276--285.

\bibitem{BD01}
Bardi M., Da Lio F., 
\emph{Propagation of maxima and strong maximum principle for viscosity solutions of degenerate elliptic equations. I: Convex operators}, 
Nonlinear Analysis: Theory, Methods \& Applications 44.8 (2001): 991--1006.

\bibitem{BD03}
Bardi M., Da Lio F., 
\emph{Propagation of maxima and strong maximum principle for viscosity solutions of degenerate elliptic equations. II. Concave operators}, 
Indiana Univ. Math. J., (2003):  607--627.

%\bibitem{B02}
%{\sc T. Bhattacharya}, \emph{On the properties of $\infty$-harmonic functions and an application to capacitary convex rings}, Electronic J. Differential Equations, (2002), No. 101, 22 p.

\bibitem{BB91} {\sc R.\ Bass, K.\ Burdzy}, \emph{A boundary Harnack principle in twisted H\"older domains},
Ann. Math., 134(2), 253--276, (1991).

\bibitem{CFMS81}
{\sc L. Caffarelli, E. Fabes, S. Mortola, S. Salsa},
\emph{Boundary behavior of nonnegative solutions of elliptic operators in divergence form},
 Indiana Univ. Math. J., 30 (1981), no. 4, 621--640.

\bibitem{C58}
Calabi E. 
\emph{An extension of E. Hopf’s maximum principle with an application to Riemannian geometry},
Duke Mathematical Journal 25.1 (1958): 45--56.

\bibitem{CIL92}
Crandall M.C., Ishii H., Lions P.-L.,
\emph{User's guide to viscosity solutions of second order partial differential equations}, Bulletin of the American Mathematical Society,
\textbf{27} (1992), 1--67.

\bibitem{D77}
{\sc B.\ Dahlberg}, \emph{On estimates of harmonic measure},
Arch. Ration. Mech. Anal., 65 (1977), 275--288.


\bibitem{JK82} {\sc D.\ Jerison, C.\ Kenig}, \emph{Boundary behavior of harmonic functions in nontangentially accessible domains}, Adv. Math. 46 (1982), 80--147.

\bibitem{CC95}
Caffarelli L.A., Cabre X.,
\emph{Fully nonlinear Elliptic equations}.
American Mathematical Society Colloquium Publications, 43.
American Mathematical Society, Providence, RI, 1995.

\bibitem{CV07}
Capuzzo-Dolcetta I., Vitolo A.,
\emph{On the maximum principle for viscosity solutions of fully nonlinear elliptic equations in general domain.}
Le Matematiche 62.2 (2007): 69--91.


\bibitem{FZZ03}
Fan X., Zhao Y., Zhang Q.,
\emph{A strong maximum principle for p(x)-Laplace equations},
Chinese J. Contemp. Math. 24 (2003) 277--282.

\bibitem{FPT}
Fleckinger-Pell\'e, J., Tak\'a\v c, P.,
\emph{Uniqueness of positive solutions for nonlinear cooperative systems with the {$p$}-{L}aplacian}, Indiana Univ. Math. J. 43 (1994), no. 4, 1227--1253.

\bibitem{GT}
Gilbarg D., Trudinger N. S.,
\emph{Elliptic partial differential equations of second order}.
Springer, 2015.

\bibitem{J13}
Julin V.,
\emph{Generalized Harnack inequality for nonhomogeneous elliptic equations.}
Archive for Rational Mechanics and Analysis 2.216 (2015): 673--702.

\bibitem{JLP10}
Juutinen P., Lukkari T., Parviainen M.,
\emph{Equivalence of viscosity and weak solutions for the p(x)-Laplacian.}
Ann. Inst. H. Poincare Anal. Non Lineaire 27 (2010), no. 6, 1471--1487.

\bibitem{KK98}
Kawohl B., Kutev N.,
\emph{Strong maximum principle for semicontinuous viscosity solutions of nonlinear partial differential equations.}
Archiv der Mathematik 70.6 (1998): 470-478.

\bibitem{K72} {\sc J.T. Kemper}, \emph{A boundary Harnack principle for Lipschitz domains and the principle of positive singularities},
Comm. Pure Appl. Math. 25 (1972), 247--255.

\bibitem{LN07}
Lewis J.L., Nystr\"om K.,
\emph{Boundary behaviour for $p$-harmonic functions in Lipschitz and starlike Lipschitz ring domains} Ann. Sci. \'Ecole Norm. Sup. (4), 40 (2007), no. 5, 765--813.

%\bibitem{LN08}
%{\sc J. Lewis, K. Nystr\"{o}m}, \emph{The boundary Harnack inequality for infinity harmonic functions in the plane},
%Proc. Amer. Math. Soc., 136 (2008), no. 4, 1311--1323.

\bibitem{LN10}
Lewis J.L., Nystr\"om K.,
\emph{Boundary behavior and the Martin boundary problem for $p$-harmonic functions in Lipschitz domains} Ann. of Math. (2), 172 (2010), no. 3, 1907--1948.

\bibitem{LN12}
Lewis J.L., Nystr\"om K.,
\emph{Regularity and free boundary regularity for the -Laplace operator in Reifenberg flat and Ahlfors regular domains}, J. Amer. Math. Soc., 25 (2012), 827--862.

\bibitem{N11}
Lundstr\"om N.L.P.,
\emph{Estimates for $p$-harmonic functions vanishing on a flat}, Nonlinear Anal., 74 (2011), no. 18, 6852--6860.

\bibitem{N16}
Lundstr\"om N.L.P.,
\emph{Phragmén-Lindelöf Theorems and p-harmonic Measures for Sets Near Low-dimensional Hyperplanes}
Potential Analysis volume 44, pages 313--330 (2016)

%\bibitem{LN11}
%{\sc N. L. P. Lundstr\"om, K. Nystr\"om}, \emph{The boundary Harnack inequality for solutions to equations of Aronsson type in the plane}, Ann. Acad. Sci. Fenn. Math., 36 (2011), no. 1, 261--278.

\bibitem{Mendez}
M\'endez, O.,
\emph{Eigenvalues and eigenfunctions of the $p(\cdot)$-Laplacian. A convergence analysis},
Function spaces in analysis, 223--229,
Contemp. Math., 645, Amer. Math. Soc., Providence, RI, 2015.

%\bibitem{Kaj_new} {\sc  K. Nystr\"om}, \emph{p-Harmonic functions in the Heisenberg group: boundary behaviour in domains well-approximated by non-characteristic hyperplanes}, Math. Ann., 357 (2013), no. 1, 307--353.

\bibitem{PS07}
Pucci P., Serrin J.,
\emph{The Maximum Principle (Progress in Nonlinear Differential Equations and Their Applications)},
Birkh\"auser

\bibitem{SS20}
De Silva D., Savin O.
\emph{A short proof of Boundary Harnack Principle},
Journal of Differential Equations (2020).

%\bibitem{T77}
%Trudinger N.,
%\emph{Maximum principles for linear non-uniformly elliptic operators with measurable coefficients}
%Math. Z. 156, 291--301 (1977).

\bibitem{W13}
Wolanski N.,
\emph{Local bounds, Harnack inequality and H\"older continuity for divergence type elliptic equations with nonstardard growth},
arXiv preprint arXiv:1309.2227 (2013).

\bibitem{W78} {\sc J.-M. Wu}, \emph{Comparisons of kernel functions, boundary Harnack principle and relative Fatou theorem on Lipschitz domains}, Ann. Inst. Fourier (Grenoble) 28 (1978), no. 4, 147--167.

\bibitem{Z05}
Zhang Q.,
\emph{A strong maximum principle for differential equations with nonstandard $p(x)$-growth conditions}.
Journal of Mathematical Analysis and Applications 312.1 (2005): 24--32.
\end{thebibliography}
\end{document}